\documentclass[journal,twoside,web]{ieeecolor}
\usepackage{generic}
\usepackage{cite}
\usepackage{amsmath,amssymb,amsfonts}
\usepackage{algorithmic}
\usepackage{graphicx}
\usepackage{algorithm,algorithmic}
\usepackage{hyperref}
\usepackage{amssymb}
\usepackage{verbatim}
\usepackage{mathrsfs}

  \makeatletter
  \let\NAT@parse\undefined
  \makeatother

\usepackage{tikz}
\usetikzlibrary{arrows, shapes, chains}
\tikzstyle{startstop} = [rectangle,rounded corners, minimum width=3cm,minimum height=1cm,text centered, draw=black,fill=red!30]
\tikzstyle{io} = [trapezium, trapezium left angle = 70,trapezium right angle=110,minimum width=3cm,minimum height=1cm,text centered,draw=black,fill=blue!30]
\tikzstyle{process} = [rectangle,minimum width=5cm,minimum height=2cm,text centered, text width =5cm,draw=black,fill=white]
\tikzstyle{decision} = [diamond,minimum width=3cm,minimum height=1cm,text centered,draw=black,fill=green!30]
\tikzstyle{arrow} = [thick,->,>=stealth]
\usepackage{subcaption}
\usepackage{amsthm}
  \theoremstyle{plain}
  \newtheorem{theorem}{Theorem}
  \newtheorem{lemma}{Lemma}

  \newtheorem{assumption}{Assumption}
  
  \newtheorem{example}{Example}
  
  \theoremstyle{remark}
  
\hypersetup{hidelinks=true}

\DeclareMathOperator*{\argmin}{arg\,min}

\newcommand{\R}{\mathbb{R}}
\newcommand{\w}{\omega}
\usepackage{textcomp}
\def\BibTeX{{\rm B\kern-.05em{\sc i\kern-.025em b}\kern-.08em
    T\kern-.1667em\lower.7ex\hbox{E}\kern-.125emX}}
\markboth{\hskip25pc }
{FENG \MakeLowercase{\textit{et al.}}: Douglas-Rachford Splitting for Group-Sparse Feedback Linear-Quadratic Control}

\sloppy
\allowdisplaybreaks[4]
\raggedbottom

\begin{document}
\title{{Douglas-Rachford Splitting for Group-Sparse Feedback Linear-Quadratic Control}}
\author{Lechen Feng, Xun Li, Yuan-Hua Ni
\thanks{Lechen Feng is with the Department of Applied Mathematics, The Hong Kong Polytechnic University, China. Email: {\tt fenglechen0326@163.com}.
Xun Li is with the Department of Applied Mathematics, The Hong Kong Polytechnic University, China. Email: {\tt li.xun@polyu.edu.hk}. This author is supported by the Research Grants Council (RGC) of Hong Kong under Grants 15226922 and 15225124, and partially supported by projects 4-ZZVB and PolyU 4-ZZP4.
Yuan-Hua Ni is with the College of Artificial Intelligence, Nankai University, Tianjin, China Email: {\tt yhni@nankai.edu.cn}.}
}

\maketitle

\begin{abstract}
In this paper, we study the distributed linear quadratic problem with fixed communication topology (DFT-LQ) and the sparse feedback linear quadratic (SF-LQ) problem through a unified optimization framework. 
Specifically, both problems are formulated as a nonconvex, nonsmooth optimization problem equipped with an $\ell_0$-penalty under affine constraints. To solve this problem, we first investigate the application of the Douglas-Rachford (DR) splitting algorithm. Under the local condition that the generated iterates remain on a fixed smooth manifold, we establish the convergence of the DR splitting to a stationary point. Furthermore, we characterize this stationary point as the global minimizer of a corresponding DFT-LQ problem. To bypass the restriction of the smooth manifold assumption, we introduce a projected subgradient descent algorithm that achieves global convergence without relying on smooth-manifold structures. This algorithm may serve as a warm-start mechanism that effectively drives the iterates toward the desired smooth manifolds, thereby establishing a favorable initialization where the convergence theory of the DR splitting algorithm becomes fully applicable. Numerical experiments shed light on the effectiveness of the proposed methods in distributed group-sparse controller design.

\end{abstract}

\begin{IEEEkeywords}
Linear-quadratic problem,  sparse feedback, constrained optimization, DR splitting
\end{IEEEkeywords}

\section{Introduction}

During the last few decades, distributed control has attracted significant attention due to its appealing features, including reduced communication costs and parallelizable computation.  In this paper, we study the distributed LQ problem with fixed communication topology (DFT-LQ) and the sparse feedback LQ (SF-LQ) problem. Specifically, we establish an optimization framework for designing group-sparse feedback gains that minimize the $\mathcal{H}_2$ performance cost of distributed systems. Our approach consists of three steps. First, we reformulate the SF-LQ and DFT-LQ problems as a nonsmooth and nonconvex affine-constrained optimization problem with an $\ell_0$-penalty. Second, we show that the Douglas-Rachford (DR) splitting algorithm converges to a stationary point of the resulting problem, under the assumption that the iterates remain within a fixed smooth manifold. Surprisingly, we further prove that such a stationary point corresponds to a global minimizer of a certain DFT-LQ problem. Finally, we develop a globally convergent algorithm that may serve as a warm-start procedure for DR splitting by steering the iterates toward a smooth manifold. Ultimately, this work aims to facilitate the establishment of effective coordination among multiple sensors or decision-makers using the minimum possible information exchange. Even in scenarios where communication resources are not scarce, a profound understanding of these fundamentals (and the elucidation of the inherent fundamental trade-offs) is instrumental in constructing a desirable system architecture, which ultimately helps reduce both computational requirements and overall complexity.

\subsection{Related Works}
\textbf{1) DFT-LQ Problem.}
The DFT-LQ problem presents significant challenges, while Tsitsiklis and Athans (1985) \cite{Tsitsiklis-1985} pointed out the inherent difficulty of decentralized decision making and suggest that optimality may be an elusive goal. Consequently, three mainstream research paradigms have emerged for tackling the DFT-LQ problem. The first line of research reformulates the DFT-LQ problem into an infinite-dimensional convex problem and utilize finite-dimensional approximations (e.g., Ritz approximations) to solve it; see \cite{rotkowitz2005characterization,furieri2020sparsity} and the references therein. The second line of research focuses primarily on fully decentralized DFT-LQ problems. By relaxing the DFT-LQ problem into a finite-dimensional convex optimization problem, this approach enables the use of existing convex optimization techniques for controller synthesis; see \cite{i14,ref1,i9,i12} for the details. The third line of research treats the DFT-LQ  problem directly as a nonconvex optimization problem and develops nonconvex optimization algorithms for solving it; e.g., rank-$1$ semi-definite program (SDP) algorithm \cite{i10}, zero-order distributed policy optimization algorithm \cite{i11} and manifold Newton-type algorithm \cite{talebi2022policy}. Nevertheless, the above three research paradigms suffer from the following intrinsic limitations, respectively:
\begin{itemize}
  \item For the first line of research, the infinite-dimensional problem cannot be solved and therefore necessarily relies on finite-dimensional approximations, which may introduce non-negligible and uncontrollable approximation errors.
  \item For the second line of research, the resulting convex optimization framework is mainly restricted to fully decentralized communication structures and therefore cannot accommodate general DFT-LQ problems.
  \item For the third line of research, existing algorithms are typically heuristic or only admit local convergence guarantees to stationary points, without a precise characterization of the resulting stationary solutions.
\end{itemize}

\textbf{2) SF-LQ Problem.}
The SF-LQ problem exhibits the following non-constrained reformulation
\begin{equation}\label{l0-intro}
  \min_{K\in\mathcal{S}}~J(K)+\gamma \Vert K\Vert_0
\end{equation}
with LQ cost $J(\cdot)$, stabilizing feedback set $\mathcal{S}$ and constant $\gamma>0$. Due to the nonconvex nature of both the objective function $J(K)$ and the feasible set $\mathcal{S}$, existing sparse optimization algorithms are not directly applicable. Therefore, it is necessary to design specialized algorithms tailored to problem \eqref{l0-intro}.
Specifically, Lin \textit{et al.} (2013) \cite{kk4} utilized Alternating Direction Method of Multipliers (ADMM) to solve problem \eqref{l0-intro}, although no theoretical convergence analysis has been presented.
Bahavarnia (2015) \cite{bahavarnia2015sparse} and Park \textit{et al.} (2020) \cite{park2020structured} investigated the convex relaxation of problem \eqref{l0-intro}:
\begin{equation}\label{l1-intro}
  \min_{K\in\mathcal{S}}~J(K)+\gamma \Vert K\Vert_1,
\end{equation}
and proved that the Structured Policy Iteration algorithm can solve problem \eqref{l1-intro} with a convergence guarantee to a stationary point.
Following the approach of $\ell_1$ relaxation, Takakura and Sato (2024) \cite{takakura2023structured} revisited the policy gradient algorithm based on the gradient projection method and showed its global convergence to $\epsilon$-stationary points of the static output SF-LQ problem.
More generally, Negi and Chakrabortty (2020) \cite{negi2020sparsity} studied the generalization of problem \eqref{l1-intro} with time-delays in the feedback and fair sharing of bandwidth among users.

\textbf{3) First-Order Nonconvex Optimization.} The following standard problem
\begin{align}
\min_{x,y}~~&F(x)+G(y),~~\mathrm{s.t.}~~Ax+By=c,\label{nonconvex-2}
\end{align}
is well-studied in the existing literature, where $A, B, c$ are given matrices.
Concretely, ADMM is the most widely adopted and general-purpose methodology; see \cite{hong2016convergence,themelis2020douglas,li2015global} and the references therein for the details. 
In fact, both the DFT-LQ and SF-LQ problems can be reformulated into the form of problem \eqref{nonconvex-2}, where both $F$ and $G$ are nonsmooth. In particular, $G$ typically corresponds to a group $\ell_0$-penalty. However, Barber and Sidky (2024) \cite{barber2024convergence} observed that essentially all existing convergence analyses of ADMM have fallen into one of the following two categories:
\begin{itemize}
  \item Assume that either $F$ or $G$ is $L$-smooth and establish the convergence guarantees.
  \item Assume that the algorithm converges and establish the optimality properties of limit point.
\end{itemize}
Since the aforementioned works generally require at least one of $F$ or $G$ to be smooth, the jointly nonsmooth setting considered in this paper lies beyond the scope of the existing ADMM theory.

\subsection{Contributions}\label{contribution-sec}

Compared with existing results, the main contributions of the presented paper are as follows.

\textbf{1) Contributions for LQ Control.} 

\begin{itemize}
    \item From the epi-composition function perspective, We thoroughly investigate the optimization landscapes of the SF-LQ and DFT-LQ problems under the classical parameterization method, which was pioneered in \cite{i14} and has been widely adopted in studies such as \cite{ref1, i9, i12}. In particular, we analyze the subgradient properties, effective domain structures, and $L$-smoothness of the corresponding optimization problem when an additional $\ell_0$-penalty is introduced; see Lemma \ref{subgrad_varphi1} and Lemma \ref{chan}. These insights shed light on and pave the way for investigating the SF-LQ and DFT-LQ problems via nonconvex optimization algorithms.
  \item For the SF-LQ problem in particular, the $\ell_0$-penalty-induced optimization problem is of primary interest. We reveal a novel mechanism induced by the $\ell_0$-penalty for the SF-LQ problem, under which any stationary point of the SF-LQ problem corresponds to a global minimizer of a certain DFT-LQ problem; see Theorem \ref{local-minimal}. To the best of our knowledge, this is the first work to uncover such an intrinsic connection between the SF-LQ and DFT-LQ problems.
\end{itemize}

\textbf{2) Contributions for Nonconvex Optimization.}  This paper studies the DFT-LQ and SF-LQ problems through a nonsmooth and nonconvex optimization problem. Compared with existing literature, this paper offers the following distinct contributions.
\begin{itemize}
  \item We prove the convergence of DR splitting to a stationary point of the aforementioned problem, provided that the iterates remain within a fixed smooth manifold; see Theorem \ref{smoothDR}. Moreover, we show that these stationary points admit a clear control-theoretic interpretation: each of them corresponds to a global minimizer of a certain DFT-LQ problem. This significantly addresses a major limitation of existing papers, where the stationary points obtained by the proposed algorithms typically lack any clear control-theoretic interpretation or performance guarantees; see \cite{i10,i11,talebi2022policy,kk4,takakura2023structured,negi2020sparsity,bahavarnia2015sparse,park2020structured} for the details.
  \item We then introduce a difference-of-convex (DC) relaxation of the $\ell_0$-penalty and show that the relaxed formulation preserves the variational structure of the original $\ell_0$-induced problem; see Theorem \ref{thm_app}. We further establish the global convergence of the Projected Subgradient Descent (PSGD) algorithm applied to the relaxed problem (see Theorem \ref{thm7}), thereby providing a warm-start strategy for DR splitting. 
      In contrast to existing PSGD analyses that establish only Clarke stationarity (e.g., \cite{davis2020stochastic}), we demonstrate the convergence to stationary points characterized by the limiting subdifferential.
\end{itemize}


{\textbf{Notation}}. Let $\Vert \cdot\Vert$, $\Vert\cdot\Vert_F$ and $\Vert\cdot\Vert_0$ be the spectral norm, Frobenius norm and $\ell_0$-norm of a matrix respectively. $\mathbb{S}^n$ is the set of symmetric matrices of $n\times n$;  $\mathbb{S}^n_{++}$ ($\mathbb{S}^n_+$)  is the set of positive (semi-)definite (PSD) matrices of $n\times n$; $I_n$ is identity matrix of $n\times n$; $\mathbf{0}_n$ is $(0,\dots,0)^\top\in\mathbb{R}^n$ and $\mathbf{1}_n$ is $(1,\dots,1)^\top\in\mathbb{R}^n$.  $A\succ B(A\succeq B)$ means that the matrix $A-B$ is positive (semi-)definite.
$\mathrm{vec}(A)$ denotes the column vector formed by stacking columns of $A$ one by one, and
$\Gamma_{+}^n$ is $\{\mathrm{vec}(A)\colon A\in\mathbb{S}_+^n\}$.
The operator $\langle A,B\rangle$ denotes the Frobenius inner product, i.e., $\langle A,B\rangle=\mathrm{Tr}(A^\top B)$ for all $A,B\in\mathbb{R}^{m\times n}$, and the notation $\otimes$ denotes the Kronecker product of two matrices.
For $\tau>0$, introduce the proximal operator of $g\colon \mathbb{R}^n\to\mathbb{R}$
\begin{equation*}
  \mathbf{prox}_{\tau g}(z):=\underset{y\in\mathbb{R}^n}{\operatorname*{argmin}}\left\{g(y)+\frac{1}{2\tau}\left\|y-z\right\|^2\right\}.
\end{equation*}
For any subset $C\subseteq\mathbb{R}^n$, $\delta_{C}(\cdot)$ is the indicator function of $C$, while we denote $\mathbf{Proj}_{C}(x)\doteq \mathbf{Prox}_{\delta_C}(x)$ for any $x\in\mathbb{R}^n$.
For $n\geq 1$, the set $[n]$ denotes $\{1,\dots,n\}$. 
Given a vector $x\in\mathbb{R}^n$, the subvector of $x$ composed of the components of $x$ whose indices are in a given subset $\mathbb{I}\subseteq [n]$ is denoted by $x_\mathbb{I}\in\mathbb{R}^{|\mathbb{I}|}$.
For an extended real-valued function $f\colon \mathbb{R}^n\to [-\infty,+\infty]$, the effective domain is the set $\mathrm{dom}(f)=\{x\in\mathbb{R}^n\colon f(x)<\infty\}$.
We denote $\operatorname{ri}(C)$ as the relative interior of $C$.
For any $\alpha\in\mathbb{R}$, the $\alpha$-level set of a function $f:\mathbb{R}^n\to[-\infty,\infty]$ is the set
$\mathbf{lev}_{\leq\alpha}f=\{x\in\mathbb{R}^n\colon f(x)\leq\alpha\}.$ 
$\widehat{\partial}f(x)$, $\partial f(x)$, $\partial^\infty f(x)$ and $\partial^C f(x)$
denote the Fr\'echet, limiting, horizon, and Clarke subdifferentials of \(f\) at \(x\), respectively; see Section 8 of \cite{rockafellar-book} for the details. $\mathcal{N}_C(x)\doteq \partial \delta_C(x)$ denotes the limiting normal cone to \(C\) at \(x\).
Given $h\colon\mathbb{R}^n\to\bar{\mathbb{R}}$ and $F\in\mathbb{R}^{m\times n}$, the epi-composition function $(Fh)\colon \mathbb{R}^m\to\bar{\mathbb{R}}$ is defined as
$
  (Fh)(u)\doteq \inf\{ h(x)\colon Fx=u\}.
$

Consider a partitioned matrix
$
  A=\left(\begin{smallmatrix}
      A_{11} & \cdots & A_{1t} \\
      \vdots & \ddots & \vdots \\
      A_{s1} & \cdots & A_{st}
    \end{smallmatrix}\right)\in\mathbb{R}^{m\times n}
$,
where block $A_{ij}\in\mathbb{R}^{m_i\times n_j}$ for $i\in [s]$ and $j\in[t]$. We denote such a partitioned matrix as $A=\mathbf{block}_{s,t}(A_{11},\dots,A_{st})$. Define a mapping $g\colon \mathbb{R}^{m\times n}\to \{0,1\}^{s\times t}$ that maps a partitioned matrix $A$ to a binary matrix indicating the presence of nonzero blocks:
\begin{equation*}
  g(A)_{ij}=\left\{
  \begin{aligned}
  &1,~~A_{ij} \neq 0,\\
  &0,~~A_{ij} = 0.
  \end{aligned}
  \right.
\end{equation*}
Based on this mapping, we define the group $\ell_0$-norm of a partitioned matrix $A$ as $\Vert A\Vert_{s,t; A_{11},\dots,A_{st};0}=\Vert g(A)\Vert_0$, which counts the number of nonzero blocks in $A$. For brevity, we write this as $\Vert A\Vert_{s,t;0}$ when the block structure is clear from the context. 
For notational convenience, we identify the block index $(i,j)\in [s]\times[t]$ 
with the scalar index $\ell = i + s(j-1)\in [st]$, corresponding to the 
column-major ordering consistent with the vectorization of $P$. 
Accordingly, all block-dependent vectors are arranged following this ordering.
We define $\pi(\mathrm{vec}(P)) \in \{0,1\}^{st}$ as the binary vector encoding 
the block sparsity pattern of $P$, and define 
$\widetilde{\pi}(\mathrm{vec}(P))\in\mathbb{R}^{st}_+$ as the vector collecting 
the squared Frobenius norms of the blocks of $P$.

\section{Preliminaries}\label{section2}

Consider a linear time-invariant (LTI) system
\begin{equation}\label{system}
\begin{aligned}
\dot{x}(t)&=Ax(t)+B_2u(t)+B_1w(t),\\
z(t)&=Cx(t)+Du(t)
\end{aligned}
\end{equation}
with state $x(t)\in\mathbb{R}^n$, input $u(t)\in\mathbb{R}^m$, exogenous disturbance input $w(t)\in\mathbb{R}^l$, controlled output $z(t)\in\mathbb{R}^q$ and matrices $A,B_2,B_1,C,D$ with proper sizes. The infinite-horizon LQ problem is to find a linear static state feedback gain $K$ such that
$u(t)=-Kx(t)$ solves
\begin{align*}
 \min_{K}~\int_{0}^{\infty}z(t)^\top z(t){\rm d}t.
\end{align*}
Under $C^\top D=0$ and $D^\top D \succ 0$, the above problem is equivalent to
\begin{align*}
 \min_{K}~J(K)\doteq{\rm Tr}((C-DK)W_c(C-DK)^\top),
\end{align*}
where $W_c$ is the controllability Gramian associated with the closed-loop system; see Section 4 of \cite{i14} for the details.
Letting $\mathcal{S}$ denote the set of controllers that internally stabilize system (\ref{system}), we look at solving the following two optimization problems:
\begin{align}
  &\min_{K\in\mathcal{S}}~J(K),~{\rm s.t.}~K\in{\mathcal{K}},~\text{(DFT-LQ)}, \label{intro-1}\\
  &\min_{K\in\mathcal{S}}~J(K),~{\rm s.t.}~K\text{~is group sparse},~\text{(SF-LQ)}, \label{intro-2}
\end{align}
where $\mathcal{K}\doteq \{K\in\mathbb{R}^{m\times n}\colon K_{ij}=0,~\text{for all}~(i,j)\in\mathbf{U}\}$ with predetermined subset $\mathbf{U}\subseteq [m]\times [n]$.

\subsection{SF-LQ Problem}

Denote $F=\left(\begin{smallmatrix}
      A & -B_2 \\
      \mathbf{0}_{m\times n} & \mathbf{0}_{m\times m}
                \end{smallmatrix}\right)$, $G=\left(\begin{smallmatrix}
        \mathbf{0}_{n\times m}\\
        I_m
                \end{smallmatrix}\right)$, $Q=\left(\begin{smallmatrix}
B_1B_1^\top & \mathbf{0}_{n\times m} \\
        \mathbf{0}_{m\times n} & \mathbf{0}_{m\times m}
                \end{smallmatrix}\right)$ and $R=\left(\begin{smallmatrix}
C^\top C & \mathbf{0}_{n\times m} \\
          \mathbf{0}_{m\times n} & D^\top D
                \end{smallmatrix}\right)$, while let block matrix
$
W=\left(\begin{smallmatrix}
W_1 & W_2\\
W_2^\top & W_3
\end{smallmatrix}\right)\in\mathbb{S}^p
$
with $p\doteq m+n$, $W_1\in\mathbb{S}_{++}^n$, $W_2\in\mathbb{R}^{n\times m}$ and $W_3\in\mathbb{S}^m$. We introduce the following two assumptions, which are standing assumptions throughout this paper.

\begin{assumption}\label{ass1}
Let $C^\top D=0$, $D^\top D\succ 0$, $C^\top C\succ 0$, $B_1B_1^\top\succ0$, $(A,B_2)$ be stabilizable and $(A,C)$ have no unobservable modes on the imaginary axis.
\end{assumption}

\begin{assumption}\label{ass2}
The parameter $F$ is unknown but convex-bounded, i.e., $F$ belongs to a polyhedral domain, which is expressed as a convex combination of the extreme matrices: $F=\sum_{i=1}^{M}\xi_iF_i,\xi_i\geq 0,\sum_{i=1}^{M}\xi_i=1$, and $F_i=\left(\begin{smallmatrix}
                              A_i & -B_{2,i} \\
                              0 & 0
                            \end{smallmatrix}\right)\in\mathbb{R}^{p\times p}$
denotes the extreme vertex of the uncertain domain.
\end{assumption}

By introducing group $\ell_0$-norm into objective function of \eqref{intro-2}, the SF-LQ problem can be reformulated by 
\begin{equation}\label{sparse_feedback_1}
	\begin{aligned}
            \min_{K\in\mathcal{S}}~ J(K)+\gamma\Vert K\Vert_{s,t;K_{11},\dots,K_{st};0},
    \end{aligned}
\end{equation}
where $K_{ij}\in\mathbb{R}^{m_i\times n_j}$ represents the block components of feedback gain for any $i\in[s]$ and $j\in[t]$, and $\gamma\geq 0$ is a weighting parameter. 

\begin{lemma}[\!\!\cite{ref1}]\label{thm_para}
  Under Assumption \ref{ass1} and \ref{ass2}, one can define the set
$
  \mathscr{C}=\{W\in\mathbb{S}^p_+:A_iW_1-B_{2,i}W_2^\top+W_1A_i-W_2B_{2,i}+B_1B_1^\top\preceq0,~\forall i\in[M]\},
$
and let
    $\mathscr{K}=\{K=W_2^\top W_1^{-1}\colon W\in\mathscr{C}\}$.
  Then, $K\in\mathscr{K}$ stabilizes the closed-loop system. Moreover, $K\in\mathscr{K}$ gives
    $\langle R,W\rangle\geq \Vert H_i(s)\Vert_{\mathcal{H}_2}^2$ for any $i\in[M]$,
    where $\Vert H_i(s)\Vert_{\mathcal{H}_2}$ represents the $\mathcal{H}_2$-norm with respect to the $i$-th extreme system. Furthermore, in the absence of system uncertainty (i.e., $M=1$), the aforementioned inequality reduces to an equality, exactly yielding $\langle R,W\rangle = \Vert H(s)\Vert_{\mathcal{H}_2}^2$.
\end{lemma}

Based on Lemma \ref{thm_para}, we reformulate  problem \eqref{sparse_feedback_1} as
\begin{equation}\label{sparse_feedback_2}
         \begin{aligned}
           \min\limits_{W\in\mathscr{C} \atop K\in\mathbb{R}^{m\times n}}&\langle R,W\rangle+\gamma \Vert K\Vert_{s,t;0}~~{\rm s.t.}~~K=W_2^\top W_1^{-1}.
         \end{aligned}
\end{equation}
Due to the nonlinear manifold constraint $K=W_2^\top W_1^{-1}$, optimization problem \eqref{sparse_feedback_2} exhibits substantial challenges. 
To bypass this difficulty, a widely adopted strategy is to restrict $W_1$ to a specific structural pattern so that the sparsity of $K$ can be directly controlled by $W_2^\top$; see \cite{rotkowitz2005characterization,furieri2020sparsity,i14,ref1,i9,i12}. Specifically, for any binary-valued block matrix $X$, define $\mathbf{Sparse}(X)$ as the set of block matrices sharing the same zero block pattern as $X$. If
\begin{equation}\label{W1linear}
  W_1\in \mathbf{Sparse}(\mathbf{blockdiag}(I_{n_1},\dots,I_{n_t})),
\end{equation}
then the feedback gain $K=W_2^\top W_1^{-1}$ inherits the sparsity pattern of $W_2^\top$. Consequently, minimizing $\Vert K\Vert_{s,t;0}$ can be relaxed to minimizing $\Vert W_2^\top\Vert_{s,t;0}$ under the additional linear constraint on $W_1$.
Hence, denoting $N\doteq t(t-1)/2$,
problem \eqref{sparse_feedback_2} can be relaxed to
\begin{align}
  \min_{W,P}~~&\langle R,W\rangle+\gamma \Vert P\Vert_{s,t;0}\notag\\
  {\rm s.t.}~~&W\in\mathbb{S}^p_+,\notag\\
  &\Psi_i\doteq-V_2(F_iW+WF_i^\top+Q)V_2^\top\in\mathbb{S}_+^{n},~i\in[M],\notag\\
  &-V_{j1}WV_{j2}=0,~j\in[N],\label{sparse_feedback_4}\\
  &V_1WV_2^\top-P=0\notag
\end{align}
with $\gamma>0$, $V_1=[0,I_m],V_2=[I_n,0]$. 
Here, matrices \(V_{j1}\) and \(V_{j2}\) are constructed to enumerate all off-diagonal blocks of \(W_1\); therefore, for any $j\in[N]$, constraint
$
V_{j1}WV_{j2}=0
$
is equivalent to the block-diagonal constraint \eqref{W1linear} on $W_1$.
Let $\widetilde{W}={\rm vec}(W),\widetilde{P}={\rm vec}(P), \widetilde{\Psi}_i={\rm vec}(\Psi_i)$ for $i\in[M]$,
and define 
$
  \mathcal{A}\doteq\left( \begin{smallmatrix}
    \widehat{\mathcal{A}}\\
    V_2\otimes V_1
  \end{smallmatrix}\right)$ and
  $\mathcal{B}=\left(\begin{smallmatrix}
  \mathbf{0}_{N^*}\\
  -I_{mn}
  \end{smallmatrix}\right)$,
where $\widehat{\mathcal{A}}\doteq \left(\begin{smallmatrix}
                                           V_{12}^\top\otimes V_{11} \\
                                           \vdots \\
                                           V_{N2}^\top\otimes V_{N1} 
                                         \end{smallmatrix}\right)\in\mathbb{R}^{N^*\times p^2}$ 
 and $N^*$ represents the total number of rows in $\widehat{\mathcal{A}}$ ($N^*$ can be directly computed from the problem dimensions).
By the definition of the indicator function, problem \eqref{sparse_feedback_4} is further equivalently to the standard form:
\begin{equation}\label{sparse_feedback_6}
\begin{aligned}
\min_{\widetilde{W},\widetilde{P}}~~ &f(\widetilde{W})+g(\widetilde{P})~~{\rm s.t.}~~\mathcal{A}\widetilde{W}+\mathcal{B}\widetilde{P}=0
\end{aligned}
\end{equation}
with
\begin{align*}
&f(\widetilde{W})=\langle {\rm vec}(R),\widetilde{W}\rangle+\delta_{\Gamma_+^p}(\widetilde{W})+\sum_{i=1}^{M}\delta_{\Gamma_+^n}(\widetilde{\Psi}_i),\\
&g(\widetilde{P})=\gamma \Vert \mathrm{vec}^{-1}(\widetilde{P})\Vert_{s,t; 0},
\end{align*}
where $\mathrm{vec}^{-1}(\cdot)$ denotes the inverse operator of $\text{vec}(\cdot)$.

\subsection{DFT-LQ Problem}\label{section_link}

In this subsection, we establish the relationship between the SF-LQ and DFT-LQ problems, demonstrating that SF-LQ problems can be viewed as a generalization of the DFT-LQ problem. This connection allows us to solve the DFT-LQ problem by solving problem \eqref{sparse_feedback_6}.
Generally, a group-sparse feedback gain $K$ can lead to a multi-agent distributed control system. For example, given a group-sparse feedback gain $K=\mathbf{block}_{s,t}(K_{11},\dots,K_{st})$, it can induce a $s$-agents distributed system. Specifically, the global control action is composed of local control actions: $u(t)=[u_1(t)^\top,\dots,u_s(t)^\top]^\top$, where $u_i(t)$ is the control input of
  agent $i$. At time $t$, agent $i$ directly observes a partial state $x_{\mathcal{I}_i}(t)$ with
  \begin{equation*}
    \mathcal{I}_i=\bigcup_{j=1,\dots,t,~K_{ij}\neq 0}\left\{n_{j-1}+1,\dots,n_j\right\}\subseteq [n], \text{~and~}n_0\doteq0.
  \end{equation*}
Similarly, DFT-LQ problem \eqref{intro-1} can be relaxed into the following form
  \begin{align*}
  \min_{W,P}~\langle R,W\rangle~~{\rm s.t.}~~\mathcal{G}(W)\in\mathcal{K},~[W_2^\top]_{ij}=0,~(i,j)\in\mathbf{U}
  \end{align*}
with predefined $\mathbf{U}\subseteq [m]\times [n]$, which can be regarded as a modification of problem \eqref{sparse_feedback_6}.
In summary, the purpose of this subsection is merely to illustrate that the DFT-LQ problem can also be formulated by the form of \eqref{sparse_feedback_6}. From this perspective, the SF-LQ problem provides a unified optimization framework that naturally incorporates distributed controller synthesis with prescribed communication topology. Moreover, the DFT-LQ problem admits more general extensions beyond the formulation considered here. Due to space limitations, we refer interested readers to Section 2.2 of \cite{feng-part1} for a comprehensive discussion.

\section{DR Splitting Algorithm}

Obviously, by introducing an augmented variable $s$ and the definition of the epi-composition function, optimization problem \eqref{sparse_feedback_6} is equivalent to  
\begin{equation}\label{admm-standard}
  \min_{s} ~~\varphi_1(s)+\varphi_2(s)
\end{equation}
with $\varphi_1(s)\doteq(\mathcal{A}f)(s)$ and $\varphi_2(s)\doteq(\mathcal{B}g)(-s)$.




\subsection{Optimization Landscape of Optimization Problem \eqref{admm-standard}}\label{landscape}

\begin{lemma}\label{subgrad_varphi1}
Under Assumption \ref{ass1} and \ref{ass2}, the following results hold.
\begin{enumerate}
	\item $\varphi_1(s)$ is closed and convex.
	\item Denote set
$
  \widetilde{W}(\hat{s})=\{\widetilde{W}\in\mathbb{R}^{p^2}\colon \mathcal{A}\widetilde{W}=\hat{s},f(\widetilde{W})=\varphi_1(\hat{s})\},
$
and assume that $\widetilde{W}(\hat{s})\neq\emptyset$. Then, for arbitrary $\widetilde{W}\in\widetilde{W}(\hat{s})$, it follows that
\begin{align*}
  \partial \varphi_1(\hat{s})&{\supseteq}\bigg\{s\in\mathbb{R}^{N^*+mn}\colon \mathcal{A}^\top s\in\mathrm{vec}(R)\\
  &~~~~+\mathcal{N}_{\Gamma_+^p}(\widetilde{W})+\sum_{i=1}^{M}\mathcal{N}_{\mathcal{H}_i^{-1}(\Gamma_+^n)}(\widetilde{W})\bigg\}.
\end{align*}
\item $\varphi_2(s)=(\mathcal{B}g)(-s)$ is a proper closed function. Concretely, it follows that
\begin{equation*}
  \varphi_2(s)=\gamma \Vert \pi(-[\mathbf{0}_{mn\times N^*}~I_{mn}]s)\Vert_0+\delta_{A(s)}(s)
\end{equation*}
with $A(s)=\{s\colon [I_{N^*}~\mathbf{0}_{N^*\times mn}]s=0\}$.
\item   Suppose the parameters $(A, B_2)$ of LTI system \eqref{system} are known. Then, $A$ is a Hurwitz matrix iff $\mathrm{dom}(\varphi_1) = \mathbb{R}^{N^*+mn}$.
\end{enumerate}
\end{lemma}

\begin{proof}
  See Appendix for the proof.
\end{proof}

By Lemma \ref{subgrad_varphi1}, problem \eqref{admm-standard} can be rewritten as
\begin{equation}\label{subgrad-flow2}
  \min_{s_2\in\mathbb{R}^{mn}}~\zeta(s_2)\doteq\underbrace{(\mathcal{A}f)(Us_2)}_{\doteq \widehat{\varphi}_1(s_2)}+\underbrace{\gamma \Vert \pi(s_2)\Vert_0}_{\doteq \widehat{\varphi}_2(s_2)}
\end{equation}
with $ U=\left(\begin{smallmatrix}
    \mathbf{0}_{N^*\times mn} \\
      I_{mn}
\end{smallmatrix}\right)$. We next characterize the variational structure of problem \eqref{subgrad-flow2}. 
In particular, every stationary point admits a strong optimality property: it is not only a local minimizer of \eqref{subgrad-flow2}, but also a global minimizer of an auxiliary equality-constrained convex problem.

\begin{theorem}\label{local-minimal}
Under Assumption \ref{ass1} and \ref{ass2}, let $\bar{s}_2\in \mathrm{ri}(\mathrm{dom}(\widehat{\varphi}_1))$ be a stationary point of problem \eqref{subgrad-flow2}, i.e., $ 0\in \partial \zeta(\bar{s}_2)$.
Then, $\bar{s}_2$ is a local minimizer. Moreover, there exists $\w_{\bar{s}_2}\subseteq [st]$, such that $\bar{s}_2$ is a global minimizer of 
\begin{equation}\label{auxi1}
  \min_{s_2\in\mathbb{R}^{mn}}~\widehat{\varphi}_1(s_2),~\text{s.t.}~[\pi(s_2)]_i=0,\forall i\in (\w_{\bar{s}_2})^c. \tag{$Q_{\w_{\bar{s}_2}}$}
\end{equation}
\end{theorem}

\begin{proof}
See Appendix for the proof.
\end{proof}

\subsection{Convergence Analysis}\label{chap-smooth}

In this subsection, we will study the convergence of the DR splitting algorithm for solving problem \eqref{subgrad-flow2}. Consider the  DR splitting iterative scheme  \cite{themelis2020douglas}:
\begin{align}
& u_{k}\in\mathbf{prox}_{\eta\widehat{\varphi}_1}(s_{2;k}), \notag\\
& v_{k}\in\mathbf{prox}_{\eta\widehat{\varphi}_2}(2u_{k}-s_{2;k}),\label{DR}\\
& s_{2;k+1}=s_{2;k}+\xi (v_{k}-u_{k})\notag
\end{align}
with $\eta,\xi>0$. Remarkably, applying the DR splitting algorithm \eqref{DR} to problem \eqref{subgrad-flow2} is equivalent to using ADMM algorithm:
\begin{align}\label{ADMM-intro}
  &u_{k+1}=\lambda_k-\eta^{-1}(1-\xi)(\mathcal{A}\widetilde{W}_k+\mathcal{B}\widetilde{P}_k),\notag\\
  &\widetilde{W}_{k+1}\in\argmin_{\widetilde{W}}
~ \mathcal{L}_{\eta^{-1}}(\widetilde{W},\widetilde{P}_k,u_{k+1}), \\
  &\lambda_{k+1}=u_{k+1}+\eta^{-1} (\mathcal{A}\widetilde{W}_{k+1}+\mathcal{B}\widetilde{P}_k),\notag\\
  &\widetilde{P}_{k+1}\in\argmin_{\widetilde{P}}~\mathcal{L}_{\eta^{-1}}(\widetilde{W}_{k+1},\widetilde{P},\lambda_{k+1})\notag
\end{align}
 for problem \eqref{sparse_feedback_6}; see Theorem 5.5 of \cite{themelis2020douglas} for the details. Here, the augmented Lagrangian is given by
$
  \mathcal{L}_{\eta^{-1}}(\widetilde{W},\widetilde{P},\lambda)\doteq  f(\widetilde{W})+g(\widetilde{P})+\langle \lambda, \mathcal{A}\widetilde{W}+\mathcal{B}\widetilde{P}\rangle
  +\frac{1}{2\eta}\Vert  \mathcal{A}\widetilde{W}+\mathcal{B}\widetilde{P}\Vert^2
$. By virtue of this equivalence, all results established in this subsection for the DR splitting algorithm \eqref{DR} naturally apply to the ADMM algorithm \eqref{ADMM-intro} for solving problem \eqref{sparse_feedback_6}; for brevity, we shall not explicitly restate them hereafter.

To provide the convergence analysis of the DR splitting algorithm \eqref{DR}, it is crucial to illustrate the smoothness of $\widehat{\varphi}_1$.
Concretely, $\widehat{\varphi}_1(s_2)$ can be characterized as the optimal value of the following SDP problem:
  \begin{align}
  \min_W~&\langle R,W\rangle \notag\\
  \text{s.t.}~ &\mathcal{L}_{0}(W)= Us_2,~~\mathcal{L}_i(W)=\Psi_i,~i\in[M],\label{sdp1}\\
  & { W\times \left(\otimes_{i=1}^{M}\Psi_i\right)} \in \mathcal{K}_+\doteq\mathbb{S}_+^p\times \left(\otimes_{i=1}^{M}\mathbb{S}_+^n\right)\notag
  \end{align}
with linear operator $\mathcal{L}_i(W)\doteq-V_2(F_iW+WF_i^\top+Q)V_2^\top$ for $i\in[M]$.
Here, equality constraint $\mathcal{L}_{0}(W)= Us_2$ is the rewriting of $\mathcal{A}\widetilde{W}=Us_2$.
Problem \eqref{sdp1} can be converted to standard form:
  \begin{align*}
  \min_{\mathbf{X}\in \mathcal{K}_+}~&\langle \mathbf{R},\mathbf{X} \rangle\\
   \mathrm{s.t.}~&\underbrace{\begin{bmatrix}
                    \mathcal{L}_0 &  &  \ & \\
                    -\mathcal{L}_1 & \mathcal{I} &  \ &  \\
                    \vdots & \vdots &\ \ddots & \\
                    -\mathcal{L}_M& 0  & \cdots & \mathcal{I}
                   \end{bmatrix}}_{\doteq \mathbf{L}}
                  \mathbf{X}=\underbrace{
  \begin{pmatrix}
    Us_2 \\
    \mathbf{0}_{n\times n} \\
    \vdots\\
    \mathbf{0}_{n\times n}
  \end{pmatrix}}_{\doteq \mathbf{b}}
  \end{align*}
  with $\mathbf{R}\doteq(R,\mathbf{0}_{n\times n},\dots,\mathbf{0}_{n\times n})^\top$ and $\mathbf{X}\doteq(W,\Psi_1,\dots,\Psi_M)^\top$,
which exhibits the following dual form:
\begin{equation*}
  \begin{aligned}
  \max_{\mathbf{y},\mathbf{S}}~ \mathbf{b}^\top \mathbf{y}~~\text{s.t.}~~& \mathbf{L}^*\mathbf{y}+\mathbf{S}=\mathbf{R},~~\mathbf{S} \in\mathcal{K}_+,
  \end{aligned}
\end{equation*}
where $\mathbf{y}\doteq (y_0^\top,y_1^\top,\dots,y_M^\top)^\top$ is the multiplier w.r.t. equality constraint $\mathbf{L}\mathbf{X}=\mathbf{b}$ and  $\mathbf{L}^*$ is the adjoint of $\mathbf{L}$. Solving problem \eqref{sdp1} is equivalent to solving equation:
\begin{equation}\label{kkt-system}
  F(\mathbf{X},\mathbf{y},\mathbf{S})\doteq\begin{bmatrix}
         \mathbf{R}-\mathbf{L}^*\mathbf{y}-\mathbf{S} \\
         \mathbf{L}\mathbf{X}-\mathbf{b} \\
         \mathbf{S}-\mathbf{Proj}_{\mathcal{K}_+}[\mathbf{S}-\mathbf{X}]
       \end{bmatrix}=0;
\end{equation}
see B. Eaves (1971) \cite{eaves1971basic}. By Theorem 11.50 of \cite{rockafellar-book} and Theorem 4.20 of \cite{ref5}, it follows that
\begin{equation*}
\begin{aligned}
  \partial \widehat{\varphi}_1(s_2)= &\left\{U^\top \bar{y}_0\colon (\bar{\mathbf{X}},\bar{\mathbf{\mathbf{y}}},\bar{\mathbf{S}}) \text{~solves equation}\right.\\ 
  &\left.\text{\eqref{kkt-system} with~}\bar{\mathbf{y}}\doteq(\bar{y}_0^\top,\dots,\bar{y}_M^\top)^\top\right\}.
\end{aligned}
\end{equation*}
Henceforth, the study of the smoothness of $\widehat{\varphi}_1(s_2)$ is reduced to the conditions under which equation \eqref{kkt-system} admits a unique solution. The following lemma provides the sufficient conditions under which $F(\mathbf{X},\mathbf{y},\mathbf{S})$ is a locally Lipschitz homeomorphism.

\begin{lemma}[Theorem 18 of \cite{chan2008constraint}]\label{chan}
Denoting $\mathcal{K}\doteq \mathbb{S}^p \times \left(\otimes_{i=1}^M \mathbb{S}^n\right)$ and $\mathbf{N}\doteq\mathrm{dim}(\mathbf{b})=N^*+mn+Mn^2$, let $(\bar{\mathbf{X}},\bar{\mathbf{\mathbf{y}}},\bar{\mathbf{S}})\in \mathcal{K}\times \R^\mathbf{N}\times \mathcal{K}$ satisfy equation \eqref{kkt-system}. Then, the following are equivalent. 
\begin{enumerate}
  \item The following nondegeneracy conditions hold:
  \begin{equation}\label{non}
    \begin{aligned}
    & \mathbf{L} \left( \mathrm{span} \left(\mathcal{T}_{\mathcal{K}_+}(\bar{\mathbf{X}})\right) \right)= \mathbb{R}^\mathbf{N},\\
    & \mathbf{L}^* \left(\mathbb{R}^\mathbf{N}\right) + \mathrm{span} \left(\mathcal{T}_{\mathcal{K}_+}(\bar{\mathbf{S}})\right) = \mathcal{K},
    \end{aligned}
  \end{equation}
      where $\mathcal{T}_{\mathcal{K}_+}(\bar{\mathbf{X}})$ denotes the tangent cone of $\mathcal{K}_+$ at $\bar{\mathbf{X}}$ and $\mathbf{span}\left(\mathcal{T}_{\mathcal{K}_+}(\bar{\mathbf{X}})\right)$ is the largest linear space in $\mathcal{T}_{\mathcal{K}_+}(\bar{\mathbf{X}})$.
  \item $F$ is a locally Lipschitz homeomorphism near $(\bar{\mathbf{X}},\bar{\mathbf{\mathbf{y}}},\bar{\mathbf{S}})$.
\end{enumerate}

\end{lemma}

By Lemma \ref{chan}, under the nondegeneracy conditions \eqref{non}, $\widehat{\varphi}_1$ is locally $L$-smooth. Thus, we have a clear structural understanding of $\widehat{\varphi}_1$: it consists of multiple locally $L$-smooth branches $\{\mathcal{S}_i\}\subseteq \mathbb{R}^{mn}$, each associated with a nondegenerate solution of equation \eqref{kkt-system}. At the boundaries between these branches, the nondegeneracy conditions \eqref{non} fail, leading to a  breakdown of smoothness, i.e., $\widehat{\varphi}_1$ exhibits a nonsmooth transition between smooth regimes. Empirically, DR splitting algorithm \eqref{DR} typically enters and remains within one of these smooth branches after finite iterations (often referred to as the ``active manifold identification'' property; see Section 3.2 of \cite{bai2026avoiding}). Motivated by this observation, throughout this subsection, we focus on the convergence behavior after the iterates have entered a specific smooth regime, as formalized below.

\begin{assumption}\label{lsmooth}
Assume that the sequences $\{u_k\}_{k\geq 0}$ and $\{s_{2;k}\}_{k\geq 0}$ generated by the DR splitting algorithm \eqref{DR} is contained in a closed convex region $\mathcal{S}^* \subseteq \mathrm{dom}(\widehat{\varphi}_1)$ s.t. $\widehat{\varphi}_1$ is $L_{\widehat{\varphi}_1}$-smooth on $\mathcal{S}^*$.
\end{assumption}

\begin{theorem}\label{smoothDR}
Under Assumption \ref{ass1}-\ref{lsmooth}, suppose that  the parameters satisfy $\eta<\min\left\{\frac{2-\xi}{2L_{\widehat{\varphi}_1}},\frac{1}{L_{\widehat{\varphi}_1}}\right\}$ and $\xi\in(0,2)$.
Let $\{(u_k, v_k, s_{2;k})\}_{k \geq 0}$ be the sequence generated by the DR splitting algorithm \eqref{DR}. Then, the sequences $\{u_k\}_{k\geq 0}$ and $\{v_k\}_{k\geq 0}$ converge to the same limiting point $u^\prime$, which satisfies the first-order optimality condition $0\in {\partial}\zeta(u^\prime)$.
\end{theorem}

\begin{proof}
See Appendix for the proof. 
\end{proof}

By Theorem \ref{local-minimal} and Theorem \ref{smoothDR}, there exists an index set $\w_{u^\prime}\subseteq [st]$, depending on $u^\prime$, such that $u^\prime$ is a global minimizer of
\begin{equation}\label{pf33}
  \begin{aligned}
  \min_{s_2\in\mathcal{S}^*}~&\widehat{\varphi}_1(s_2),~~\text{s.t.}~& [\pi(s_2)]_i=0,~ i\in\w_{u^\prime}^c.
  \end{aligned}
  \tag{$Q_{\w_{u^\prime}}$}
\end{equation}
Problem \eqref{pf33} is further equivalent to 
\begin{align*}
  \min_{W}~~&\langle R,W\rangle\\
  {\rm s.t.}~~&W\in\mathbb{S}^p_+,~~\Psi_i\in\mathbb{S}_+^{n},~~\forall i\in[M],\\
  &-V_{j1}WV_{j2}=0,~~\forall j\in[N],\\
  & \mathbf{gsupp}(W_2^\top)\subseteq \mathcal{S}_{\w_{u^\prime}}\subseteq [s]\times[t].
\end{align*}
Here, $\mathcal{S}_{\w_{u^\prime}}$ is uniquely determined by the index set $\w_{u^\prime}$ through the canonical identification between $[st]$ and $[s]\times[t]$, and $\mathbf{gsupp}(W_2^\top)$ denotes the group support of $W_2^\top$. 
The above problem admits a clear physical interpretation: it corresponds to a DFT-LQ problem with admissible communication topology $\mathcal{S}_{\w_{u^\prime}}$.
This observation reveals one of the main advantages of directly studying the $\ell_0$-induced SF-LQ problem. 
Indeed, the SF-LQ and DFT-LQ problems become intrinsically connected through the support identification mechanism induced by the $\ell_0$-penalty. 
More precisely, limit point $u^\prime$ of the iteration sequence generated by the DR splitting algorithm \eqref{DR} automatically determines an admissible communication topology $\mathcal{S}_{\w_{u^\prime}}$, and $u^\prime$ is exactly a global minimizer of the associated DFT-LQ problem under this fixed communication topology. 
Consequently, the asymptotic behavior of the algorithm admits a natural control-theoretic interpretation: The algorithm not only identifies a group-sparse communication architecture, but also simultaneously finds the global minimizer of the DFT-LQ problem associated with such architecture.

\section{Projected Subgradient Descent}\label{chap-nonsmooth}

In the previous section, the DR splitting algorithm was analyzed under Assumption \ref{lsmooth}, which guarantees that the iterates stay inside a fixed $L_{\widehat{\varphi}_1}$-smooth manifold $\mathcal{S}^*$.  
Indeed, such a smooth manifold is typically not identified at the initial stage; rather, the iterates only eventually enter and remain within this smooth branch after stabilizing (again, referred to as the ``active manifold identification'' property).
Therefore, a practical strategy is to first apply a globally stable method to steer the iterates toward a stable $L_{\widehat{\varphi}_1}$-smooth branch, and  then employ the DR splitting algorithm to benefit from its support-identification mechanism. 
In this section, we propose a Projected Subgradient Descent (PSGD) algorithm, which enjoys global convergence guarantees and can serve as a warm-start procedure for DR splitting algorithm \eqref{DR}. 
Specifically, we will investigate problem \eqref{subgrad-flow2} without Assumption \ref{lsmooth}, and approximate $\Vert\pi(s_2)\Vert_0$ by 
\begin{equation*}
  \vartheta_{\alpha,h}(s_2)\doteq h(\widetilde{\pi}(s_2))-\underbrace{\inf_{u\in\mathbb{R}^{st}}\left\{h(u)+\frac{1}{2\alpha}\Vert u-\widetilde{\pi}(s_2)\Vert\right\}}_{\doteq \mathbf{env}_\alpha(h)(\widetilde{\pi}(s_2))}
\end{equation*}
with closed convex $h$ and $\alpha>0$. By Theorem 4.1 of \cite{shen2019structured}, if $h$ is a sparsity promoting function (see Definition 3.1 of \cite{shen2019structured}), then $\vartheta_{\alpha,h}$ is also a sparsity promoting function. 
Henceforth, the following relaxation optimization problem is of concern:
\begin{equation}\label{subgrad-flow3}
  \min_{s_2\in\mathbb{R}^{mn}}~{\zeta}_{\alpha, h}(s_2)\doteq\widehat{\varphi}_1(s_2)+\frac{\gamma}{c_{h,\alpha}}\vartheta_{\alpha,h}(s_2)
\end{equation}
with constant $c_{h,\alpha}>0$. 

\begin{theorem}\label{thm_app}
  Under Assumption \ref{ass1} and \ref{ass2}, let $\alpha_k\downarrow 0$, then it holds that 
  $\inf\limits_{s_2\in\R^{mn}} \zeta_{\alpha_k,h}(s_2) \to \inf\limits_{s_2\in\R^{mn}} \zeta(s_2)$.
 In addition, for any choice of $\epsilon_k\downarrow 0$ and $s_2^{(k)}\in\epsilon_k\text{-}{\argmin\limits_{s_2\in\R^{mn}}}~\zeta_{\alpha_k,h}(s_2)$,
 the sequence $\{s_2^{(k)}\}_{k\geq 1}$ is bounded and such that all its cluster points belong to $\argmin\limits_{s_2\in\R^{mn}}\zeta(s_2)$.
\end{theorem}

\begin{proof}
See Appendix for the proof.
\end{proof}

Given $\epsilon>0$ sufficiently small and defining
\begin{align*}
  &\widehat{\Xi}_\epsilon \doteq
  \left\{ \mathrm{vec}(W) \left|
  \begin{aligned}
  &W \succeq \epsilon I,\\
  &\Psi_i \succeq \epsilon I,~i\in [M]
  \end{aligned}
  \right.
  \right\}\subseteq \mathrm{ri}(\mathrm{dom}(f)),\\
  &\Xi_\epsilon \doteq   
\left\{
    \begin{aligned}
    &U^{-1}(\mathcal{A}(\widehat{\Xi}_\epsilon)),~\text{if}~A~\text{is not Hurwitz},\\
    &\mathbb{R}^{mn},~~~~~~~~~~\text{if}~A\text{~is Hurwitz},
    \end{aligned}
\right.
\end{align*}
it follows that $\Xi_\epsilon \subseteq \mathrm{ri}(\mathrm{dom}(\widehat{\varphi}_1)).$
Now, we study the following modification problem of \eqref{subgrad-flow3}
\begin{equation}\label{subgrad-flow4}
  \min_{s_2\in[-C,C]^{mn}\cap \Xi_\epsilon}~{\zeta}_{\alpha, h}(s_2)\doteq\widehat{\varphi}_1(s_2)+\frac{\gamma}{c_{h,\alpha}}\vartheta_{\alpha,h}(s_2)
\end{equation}
with sufficiently large $C>0$ by the PSGD algorithm:
\begin{equation}\label{subgrad-alg}
    \text{Select~}g_k\in   \partial {\zeta}_{\alpha, h}(s_{2;k}),~s_{2;k+1}^\prime\in  \mathbf{Proj}_{\Xi_\epsilon}(s_{2;k}-\alpha_k g_k),
\end{equation}
 where step-size sequence $\{\alpha_k\}_{k\geq 1}$ satisfies $\alpha_k\geq0$, $\sum_{k=1}^{\infty} \alpha_k=\infty$ and $\sum_{k=1}^{\infty} \alpha_k^2<\infty$.


\begin{theorem}\label{thm7}
  Under Assumption \ref{ass1} and \ref{ass2}, let $\{s_{2;k}\}_{k\geq 1}$ be the iterates produced by PSGD algorithm \eqref{subgrad-alg}. Then for all cluster point $s_2^*$ of $\{s_{2;k}\}_{k\geq 1}$, it holds that
$
    0 \in \partial \zeta_{\alpha,h}(s_2^*)+ \mathcal{N}_{\Xi_\epsilon}(s_2^*)
$,
and the function value sequence $\{\zeta_{\alpha,h}(s_{2;k})\}_{k\geq 1}$ converges.
\end{theorem}


\begin{proof}
See Appendix for the proof.
\end{proof}

\section{Numerical Examples}\label{section:numercial}

In this section, we provide several numerical examples to demonstrate the numerical effectiveness of the proposed algorithm. Specifically, we use ADMM algorithm \eqref{ADMM-intro} to solve optimization problem \eqref{sparse_feedback_6} (which is equivalent to utilizing DR splitting \eqref{DR} for problem \eqref{subgrad-flow2}), and the noise $w$ of \eqref{system} is characterized by an impulse disturbance vector.

\begin{example}\label{exp1}
Consider $x=[x_1,x_2,x_3]^\top$ and the system parameters $(A|B_1|B_2|C|D)$ are given by
\begin{equation*}
  \begin{pmatrix}
\begin{array}{ccc|c|cc|ccc|cc}
0 & 1 & 0 &     & 0.9315 & 0.7939 & 1 & 0 & 0 & 0 & 0 \\
0 & 0 & 1 & I_3 & 0.9722 & 0.1061 & 0 & 0 & 0 & 1 & 0 \\
0 & 0 & 0 &     & 0.5317 & 0.7750 & 0 & 0 & 0 & 0 & 1
\end{array}
\end{pmatrix}.
\end{equation*}
 We aim to solve the corresponding SF-LQ problem \eqref{sparse_feedback_6} with the block structure $K = \mathbf{block}_{2,2}(K_{11}, K_{12}, K_{21}, K_{22})$, where $K_{11}\in\mathbb{R}^{1\times 2}$, $K_{12}\in\mathbb{R}$, $K_{21}\in\mathbb{R}^{1\times 2}$ and $K_{22}\in\mathbb{R}$.
\end{example}

\textbf{Solution.}  
First, we study the convergence behaviour of ADMM algorithm \eqref{ADMM-intro} for SF-LQ problem \eqref{sparse_feedback_6} and verify the correctness of our theoretical findings. Denoting $\{(u_k, \widetilde{W}_k, \lambda_k, \widetilde{P}_k)\}_{k\geq 0}$ as the iterative sequence of ADMM, we select the initial point $(u_0, \widetilde{W}_0, \lambda_0, \widetilde{P}_0)=(\mathbf{0}_{8\times 1}, \mathbf{0}_{25\times 1}, \mathbf{0}_{8\times 1}, \mathbf{0}_{6\times 1})$, $\eta=1/30$ and $\xi=1$. We first verify the feasibility of the computed solution (see Fig. \ref{ADMM-l0-new1}). The resulting matrix $W_1^*$ is given by 
\begin{equation*}
  W_1^*=\begin{bmatrix}
 1.024 & -1.126 &  0.000 & -0.061 &  0.000 \\
-1.126 &  2.587 &  0.000 &  1.366 &  0.000 \\
 0.000 &  0.000 &  1.670 &  0.000 &  0.924 \\
-0.061 &  1.366 &  0.000 &  1.256 &  0.000 \\
 0.000 &  0.000 &  0.924 &  0.000 &  0.511 
\end{bmatrix},
\end{equation*}
which corresponds exactly to the global minimizer of the following DFT-LQ problem:
\begin{align*}
  \min_{W,P}~~&\langle R,W\rangle\\
  {\rm s.t.}~~&W\in\mathbb{S}^p_+,\\
  &-V_2(FW+WF^\top+Q)V_2^\top\in\mathbb{S}_+^{n},\\
  &W_{13}=W_{23}=0,\\
  &W_{43}=W_{51}=W_{52}=0.
\end{align*}
The associated feedback gain is obtained as $K_1^*=\left(\begin{smallmatrix} 1.000 & 0.967 & 0.000 \\ 0.000 & 0.000 & 0.573 \end{smallmatrix}\right)$ with $J(K_1^*)=2.792$. 
Alternatively, if the initial point of ADMM is selected as $(u_0, \widetilde{W}_0, \lambda_0, \widetilde{P}_0)=(\mathbf{0}_{8\times 1}, \mathbf{4}_{25\times 1}, \mathbf{0}_{8\times 1}, \mathbf{0}_{6\times 1})$, the algorithm yields a different solution:
\begin{equation*}
  W_2^*=\begin{bmatrix}
 0.889 & -1.093 &  0.000 & -0.037 & -0.208 \\
-1.093 &  2.653 &  0.000 &  1.100 &  0.842 \\
 0.000 &  0.000 &  1.953 &  0.000 &  0.837 \\
-0.037 &  1.100 &  0.000 &  0.851 &  0.481 \\
-0.208 &  0.842 &  0.837 &  0.481 &  0.670 
\end{bmatrix}.
\end{equation*}
Notably, $W_2^*$ coincides with the global minimizer of the following DFT-LQ problem:
\begin{align*}
  \min_{W,P}~~&\langle R,W\rangle\\
  {\rm s.t.}~~&W\in\mathbb{S}^p_+,\\
  &-V_2(FW+WF^\top+Q)V_2^\top\in\mathbb{S}_+^{n},\\
  &W_{13}=W_{23}=0,\\
  &W_{43}=0.
\end{align*} 
In this case, the resulting feedback gain is $K_2^*=\left(\begin{smallmatrix} 0.949 & 0.805 & 0.000 \\ 0.315 & 0.448 & 0.429 \end{smallmatrix}\right)$ with $J(K_2^*)=2.410$.
It is worth emphasizing that both aforementioned initial points are sufficiently close to certain stationary points, thereby satisfying Assumption \ref{lsmooth}. Furthermore, numerical experiments validate the correctness of Theorem \ref{local-minimal}, confirming that the stationary point of the $\ell_0$-induced SF-LQ problem and the global minimizer of the DFT-LQ problem exhibit a one-to-one correspondence.

Now, we aim to investigate the convergence behavior of ADMM when Assumption \ref{lsmooth} fails to hold. To do so, we select a ``bad'' initial point $(u_0, \widetilde{W}_0, \lambda_0, \widetilde{P}_0)=(\mathbf{0}_{8\times 1}, \mathbf{50}_{25\times 1}, \mathbf{0}_{8\times 1}, \mathbf{0}_{6\times 1})$, $\eta=0.1$ and $\xi=0.5$. As before, we start by verifying the feasibility of the solution; see Fig. \ref{ADMM-l0-10}. We may observe that sequences $\{[W_k]_{13}\}_{k\geq 0}$ and $\{[W_k]_{23}\}_{k\geq 0}$ quickly approach $0$ and exhibit regular oscillations around it; however
the sequence $\{\Vert P_k-W_{2,k}^\top\Vert\}_{k\geq 0}$ exhibits a small deviation from the origin.
Interestingly, by selecting a smaller $\eta=0.01$, it holds that $0$ belongs to the cluster point of$\{\Vert P_k-W_{2,k}^\top\Vert\}_{k\geq 0}$, and the oscillation amplitudes of the sequences $\{[W_k]_{13}\}_{k\geq 0}$, $\{[W_k]_{23}\}_{k\geq 0}$ and $\{\Vert P_k-W_{2,k}^\top\Vert\}_{k\geq 0}$ are significantly reduced; see Fig. \ref{ADMM-l0-100}.
Furthermore, through selecting $\eta=1/300$, the sequences $\{[W_k]_{13}\}_{k\geq 0}$, $\{[W_k]_{23}\}_{k\geq 0}$ and $\{\Vert P_k-W_{2,k}^\top\Vert\}_{k\geq 0}$ converge to $0$; see Fig. \ref{ADMM-l0-300}. In summary, we find that when Assumption \ref{lsmooth} is not satisfied, the convergence behavior of the algorithm is highly sensitive to the choice of $\eta$. Specifically, a smaller $\eta$ exhibits better convergence, though at the expense of sacrificing the group-sparsity level of the resulting $\widetilde{P}$. 
Remarkably, even though full convergence is lost, by selecting a subsequence from the iterative sequence, we can still recover the group-sparse feedback gain $K$.
For instance, letting $\eta = 1/30$, we may choose a proper subsequence $\mathbb{K}\subseteq \mathbb{N}$ such that $\{W_k\}_{k\in\mathbb{K}}$ converges to
\begin{equation*}
	W_3^* = \begin{bmatrix}
		6.346 & -2.709 & 0.000 & 0.200 & 0.000\\
		-2.709 & 1.172 & 0.000 & 1.115 & 0.000\\
		0.000 & 0.000 & 0.759 & 0.000 & 1.026\\
		0.200 & 1.115 & 0.000 & 7.861 & 3.460\\
		0.000 & 0.000 & 1.026 & 3.460 & 2.615
	\end{bmatrix}
\end{equation*}
and the feedback gain is given by $K_3^*=\left(\begin{smallmatrix}
 		32.934 & 77.077  & 0       \\
 0       &  0        &  1.354
                                               \end{smallmatrix}\right)$
with $J(K_3^*)=6.346$. Hence, in this case, the iterative sequence of ADMM no longer exhibits sequential convergence to a stationary point of problem \eqref{subgrad-flow2}. This underscores that the convergence behavior of ADMM under the violation of Assumption \ref{lsmooth} falls outside the scope covered by our theoretical framework, which can serve as a promising avenue for future research. Nevertheless, if we employ the PSGD algorithm as a warm start from such a ``bad'' initial point, and subsequently initialize ADMM with the resulting warm-started point, we can observe that ADMM successfully converges to $W_2^*$.  This demonstrates the effectiveness of the warm-start strategy  proposed in Section \ref{chap-nonsmooth}.

Now, we solve SF-LQ problem \eqref{sparse_feedback_6} by existing methods.

\textbf{(1) Combinatorial Optimization: MISDP Approach.}

Problem \eqref{sparse_feedback_6} can be reformulated into the following MISDP (Mixed-Integer SDP) problem
  \begin{align}\label{misdp}
\min_{W\in\mathbb{S}^p,z\in\{0,1\}^{s\times t}}&\langle R,W\rangle+\gamma \sum_{i\in [s],j\in[t]}z_{ij}\notag\\
    {\rm s.t.}~~&W\in\mathbb{S}^p_+,\notag\\
    &\Psi_i\in\mathbb{S}_+^{n},~~\forall i\in[M],\\
    &-V_{j1}WV_{j2}=0,~~\forall j\in[N],\notag\\
    &\Vert W_{2,ij}\Vert_{\infty}\leq Mz_{ij},~~\forall i\in [s],j\in[t]\notag
  \end{align}
with $M=10^4$. 
We solve problem \eqref{misdp} by the Pajarito solver (see \url{github.com/JuliaOpt/Pajarito.jl}) in Julia v1.11. When $\gamma=10$, the Pajarito solver successfully finds the global minimizer of problem \eqref{misdp}, which coincides with $W_1^*$; whereas, for $\gamma=0.1$, the resulting global minimizer coincides with $W_2^*$. This once again demonstrates the efficacy of DR splitting \eqref{DR} (equivalently, ADMM \eqref{ADMM-intro}), showing that the points found by our method coincide with the global optimal solutions delivered by the Pajarito solver. 
However, the Pajarito solver relies on branch-and-bound and cutting-plane methods, which inherently suffer from the curse of dimensionality and become computationally intractable for large-scale problems. In contrast, the proposed algorithm scales effectively and can efficiently solve problems of much larger dimensions; see Example \ref{exp2new}.

\textbf{(2) Fully Convex Relaxation.} 

Feng and Ni (2024) \cite{feng2024optimization} studied the SF-LQ problem by the following convex relaxation problem
\begin{equation}\label{exp1-convex}
\begin{aligned}
\min_{\widetilde{W},\widetilde{P}}~~ &f(\widetilde{W})+\gamma\Vert \mathrm{vec}^{-1}(\widetilde{P}) \Vert_{s,t;1}\\
{\rm s.t.}~~&\mathcal{A}\widetilde{W}+\mathcal{B}\widetilde{P}=0
\end{aligned}
\end{equation}
with group $\ell_1$-norm $\Vert\cdot \Vert_{s,t;1}$. Remarkably, problem \eqref{exp1-convex} belongs to the class of convex optimization problems; hence many off-the-shelf algorithms can be employed to solve it. When $\gamma=50$, the feedback gain is given by $K_4^*=\left(\begin{smallmatrix}
1.000 & 2.619 & 0.000 \\
0.000 & 0.000 & 1.600
             \end{smallmatrix}\right)$
with LQ cost $J(K_4^*)=7.307$, while letting $\gamma = 200$, we may obtain the feedback gain
$K_5^*=\left(\begin{smallmatrix}
1.000 & 3.817 & 0.000 \\
0.000 & 0.000 & 1.765
             \end{smallmatrix}\right)$
with LQ cost $J(K_5^*)=12.310$. We observe that the feedback gains obtained by this convex relaxation approach incur a significantly large LQ cost. Nevertheless, this approach is of independent interest, as it can also serve as an effective warm-start technique for the ADMM algorithm \eqref{ADMM-intro} when solving problem \eqref{sparse_feedback_6}. Specifically, given initial point  $(u_0, \widetilde{W}_0, \lambda_0, \widetilde{P}_0)=(\mathbf{0}_{8\times 1}, \mathbf{50}_{25\times 1}, \mathbf{0}_{8\times 1}, \mathbf{0}_{6\times 1})$  and using ADMM for solving problem \eqref{exp1-convex}, we may obtain 
\begin{equation*}
  W_{\rm init}^*=\begin{bmatrix}
 4.759 & -1.818 & 0.000 & -0.006 &  0.000 \\
-1.818 &  0.900 & 0.000 &  0.541 &  0.001 \\
0.000 & 0.000 &  0.454 &  0.001 &  0.727 \\
-0.006 &  0.541 &  0.000 &  1.410 &  0.027 \\
 0.000 &  0.000 &  0.727 &  0.027 &  1.165
\end{bmatrix}.
\end{equation*}
Then, we may select initial point $(u_0, \widetilde{W}_0, \lambda_0, \widetilde{P}_0)=(\mathbf{0}_{8\times 1}, \mathrm{vec}(W_{\rm init}^*), \mathbf{0}_{8\times 1}, \mathbf{0}_{6\times 1})$, $\eta=1/30$ and $\xi=1$, while use ADMM algorithm \eqref{ADMM-intro} for solving SF-LQ problem \eqref{sparse_feedback_6}. Ultimately, it can be observed that the resulting iterative sequence also converges to $W_1^*$.

\textbf{(3) Partial Convex Relaxation.} 

By replacing the group $\ell_0$-norm with its convex surrogate group $\ell_1$-norm, the SF-LQ problem can be relaxed to
\begin{equation}\label{exp1-opt}
         \begin{aligned}
           \min\limits_{K\in\mathcal{S}\subseteq\mathbb{R}^{m\times n}}~~&J(K)+\gamma \Vert K\Vert_{s,t;1}.\\
         \end{aligned}
\end{equation}
Lin \textit{et al.} (2013) \cite{kk4} rewrote problem \eqref{exp1-opt} into the following constrained form 
\begin{equation}\label{exp1-opt2}
\begin{aligned}
\min\limits_{\substack{K\in\mathcal{S}\subseteq\mathbb{R}^{m\times n}\\ G\in\mathbb{R}^{m\times n}}}~~&J(K)+\gamma \Vert G\Vert_{s,t;1}\\
{\rm s.t.}~~&K-G=0,
\end{aligned}
\end{equation}
and they solved the above problem by ADMM algorithm; see Section III of \cite{kk4} for details. Remarkably, due to the non-convexity of
$J(K)$ and $\mathcal{S}$, it remains an open problem whether the ADMM algorithm converges for solving \eqref{exp1-opt2}. Lin \textit{et al.} provided an open-source implementation (named LQRSP) of their ADMM-based algorithm; see \url{www.umn.edu/~mihailo/software/lqrsp/}. 
Specifically, by calling LQRSP with $\gamma=200$ and an all-zero initial point, we obtain feedback gain $K_6^*=\left(\begin{smallmatrix}
0.150 & 0.402 & 0.000 \\
0.000 & 0.000 & 0.000
             \end{smallmatrix}\right)$
with LQ cost $J(K_6^*)=17.064$.

In addition to the LQRSP solver proposed by Lin \textit{et al.}, Park \textit{et al.} (2020) \cite{park2020structured} directly studied the unconstrained problem \eqref{exp1-opt} from the perspective of policy iteration. Specifically, Park \textit{et al.} proposed the Structured Policy Iteration (S-PI) algorithm, which alternates between policy evaluation (solving Lyapunov equations) and policy improvement (proximal gradient descent:
\begin{align*}
  &F_k = K_k - \eta \nabla J(K)\\
  &K_{k+1}\in \underset{K}{\operatorname*{argmin}}~ \Vert K\Vert_{s,t;1}+\frac{1}{2\gamma \eta} \Vert K-F_k\Vert_F^2
\end{align*}
with step-size $\eta$). Although Park \textit{et al.} studied the discrete-time SF-LQ problem, we adapt their S-PI algorithm to the continuous-time SF-LQ setting of this paper and compare it with our ADMM algorithm \eqref{ADMM-intro}. Again, when $\gamma = 200$ and by S-PI algorithm with an all-zero initial point, we obtain the following feedback gain $K_7^*=\left(\begin{smallmatrix}
0.226 & 0.326 & 0.000 \\
0.000 & 0.000 & 0.000
             \end{smallmatrix}\right)$
with LQ cost $J(K_7^*)=18.300$. We observe that while both $K_6^*$ and $K_7^*$ exhibit relatively small Frobenius norms, their associated LQ costs are significantly large. This phenomenon arises because the $\ell_1$-penalty $\Vert K\Vert_{s,t;1}$ inherently introduces an over-shrinkage effect, forcing the algorithms to excessively penalize the magnitudes of the non-zero elements in order to achieve group-sparsity, which severely penalizes the control performance.

\textbf{(4) Penalty Function Approach.}

In \cite{feng-part1}, we employed a penalty function approach by introducing the following unconstrained optimization problem:
\begin{equation}\label{Pen_1}
\min_{\widetilde{W},\widetilde{P}}~~ F(\widetilde{W},\widetilde{P}):= f(\widetilde{W})+g(\widetilde{P})+H(\widetilde{W},\widetilde{P}),
\end{equation}
where $H(\widetilde{W},\widetilde{P})=\frac{\rho}{2}\Vert\mathcal{A}\widetilde{W}+\mathcal{B}\widetilde{P} \Vert^2$ represents the penalty term associated with the penalty parameter $\rho>0$. To solve problem \eqref{Pen_1}, we investigated the PALM algorithm parameterized by $(\sigma,\beta,\tau,\mu)$, whose iterative updates are governed by
  \begin{align*}
  &\widetilde{P}_{n+1}\in \mathbf{prox}_{\mu^{-1}g}(\widetilde{P}_n-\mu^{-1}\nabla_{\widetilde{P}}H(\widetilde{W}_n,\widetilde{P}_n)),\\
  &z_{n+1}\in\mathbf{prox}_{\beta^{-1}f}(\widetilde{W}_n+\beta^{-1}u_n),\\
  &\widetilde{W}_{n+1}=\widetilde{W}_n-\tau^{-1}(\nabla_{\widetilde{W}}H(\widetilde{W}_n,\widetilde{P}_{n+1})+u_n\\
  &~~~~~~~~~~+\beta(\widetilde{W}_n-z_{n+1})),\\
  &u_{n+1}=u_n+\sigma \beta (\widetilde{W}_{n+1}-z_{n+1}).
  \end{align*}
When $(\widetilde{P}_0,z_0,\widetilde{W}_0,u_0)=(\mathbf{0}_{6\times 1},\mathbf{0}_{25\times 1},\mathbf{0}_{25\times 1},\mathbf{0}_{25\times 1})$, PALM algorithm finds the feedback gain $K_8^*=\left(\begin{smallmatrix}
1.393 &  1.635 & 0.000 \\
0.000 &0.000 &  0.925
             \end{smallmatrix}\right)$
with LQ cost $J(K_8^*)=3.346$ (recalling $J(K_1^*)=2.792$ and $J(K_2^*)=2.410$); whereas, when $(\widetilde{P}_0,z_0,\widetilde{W}_0,u_0)=(\mathbf{50}_{6\times 1},\mathbf{50}_{25\times 1},\mathbf{0}_{25\times 1},\mathbf{50}_{25\times 1})$, the resulting feedback gain is given by $K_9^*=\left(\begin{smallmatrix}
1.219 & 1.188 & 0.000 \\
0.194 & 0.350 & 0.919
             \end{smallmatrix}\right)$
with LQ cost $J(K_9^*)=3.576$ (recalling $J(K_3^*)=6.346$). Consequently, both \cite{feng-part1} and the present work have independent research merit, revealing distinct operational behaviors of the two algorithms. On one hand, the ADMM algorithm for solving problem \eqref{sparse_feedback_6} (equivalently, DR splitting for solving problem \eqref{subgrad-flow2}) exhibits an elegant intrinsic mechanism: any stationary point it converges to inherently corresponds to a global minimizer of a certain DFT-LQ problem. On the other hand, the PALM algorithm for problem \eqref{Pen_1} demonstrates a more robust convergence profile. Notably, even when initialized from a highly adverse starting point, the PALM framework reliably handles the non-convex landscape and yields a group-sparse feedback gain that still incurs a relatively low LQ cost.

\begin{example}\label{exp2new}
Consider system \eqref{system} with the system matrices given by
\begin{align*}
  &A=\frac{1}{n}\mathrm{rand}(n,n)-I_n,~B_2=\mathbf{1}_{n\times m}+\frac{1}{2}\mathrm{rand}(n,m),\\
  &B_1=I_n,~C=\begin{bmatrix}
               I_n \\
               \mathbf{0}_{m\times n}
             \end{bmatrix},~D=\begin{bmatrix}
               \mathbf{0}_{n\times m} \\
               I_m
             \end{bmatrix}
\end{align*}
with $n=60$ and $m=30$, where $\mathrm{rand}(n,m)$ (with a fixed random seed of $42$) is a $n\times m$ matrix with every entry generated from the uniform distribution between $0$ and $1$. We aim to solve the corresponding SF-LQ problem \eqref{sparse_feedback_6} with the block structure $K=\mathbf{block}_{15,20}(K_{1,1},\dots,K_{15,20})$, where $K_{i,j}\in\mathbb{R}^{2\times 3}$ for all $i\in [15]$ and $j\in [20]$.
\end{example}

\textbf{Solution.} We study the convergence behavior of ADMM algorithm \eqref{ADMM-intro} for solving this large-scale SF-LQ problem. Again, we denote $\{(u_k, \widetilde{W}_k, \lambda_k, \widetilde{P}_k)\}_{k\geq 0}$ as the iterative sequence of ADMM and select an all-zero initial point, $\eta=0.02$ and $\xi=0.5$.  Let $\mathcal{N}$ denote the index set of the off-diagonal elements in $W_1$. It can be observed that both $\sum_{(i,j)\in\mathcal{N}}|[W_{k}]_{ij}|$ and $\|P_k-W_{2;k}\|_F$ decrease to a very small magnitude (on the order of $10^{-3}$) within $100$ iterations; see Fig. \ref{ADMM-high}. 
Moreover, at $k=150$, more than $70\%$ of the entries in $W_{2;k}$ become zero (where entries with an absolute value less than $10^{-4}$ are treated as numerical zeros).
Meanwhile, the closed-loop system is stabilized, as evidenced by the state response curves of the first ten states shown in Fig. \ref{response2}. This numerical example demonstrates that the proposed algorithm is scalable and well-suited for solving large-scale problems.

\begin{figure*}[htbp]
    \centering
    \begin{subfigure}[b]{0.45\textwidth}
        \includegraphics[width=\textwidth]{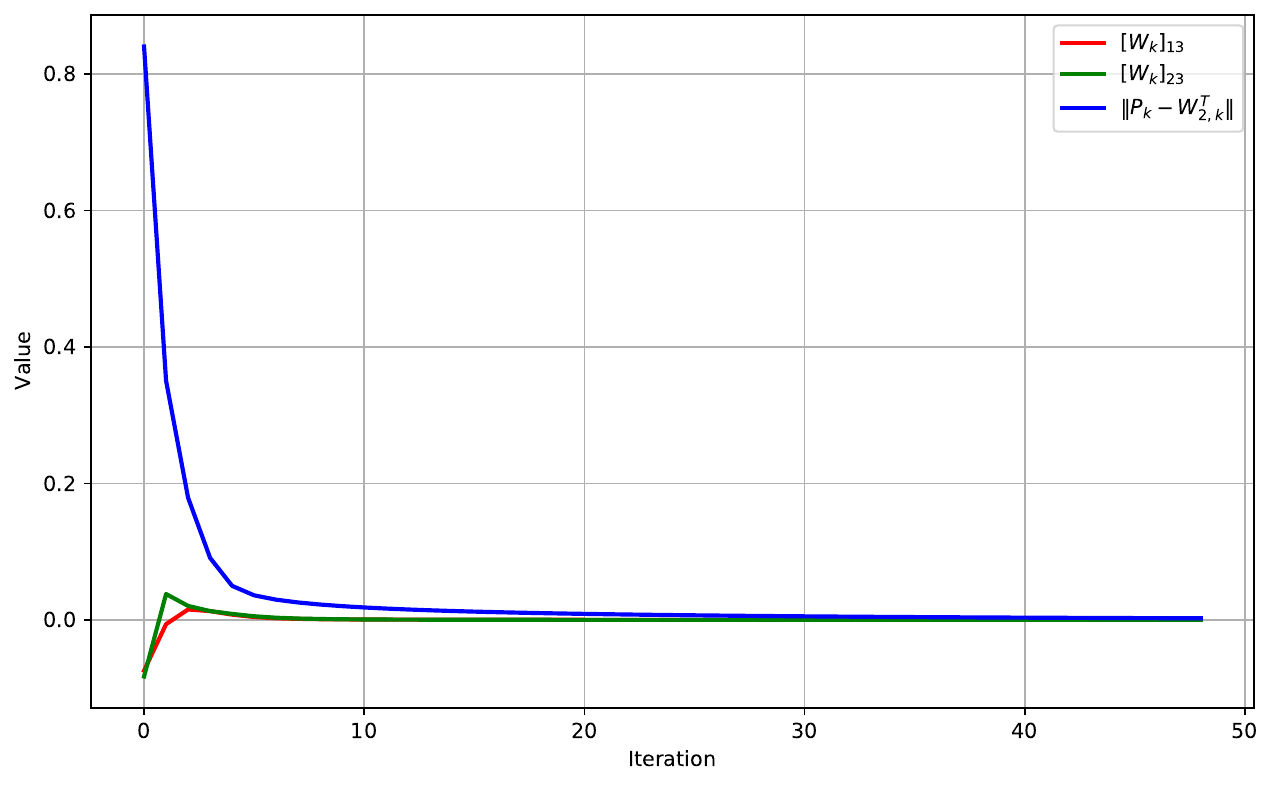}
        \caption{Feasibility of ADMM}
        \label{ADMM-l0-new1}
    \end{subfigure}
    \hfill
    \begin{subfigure}[b]{0.45\textwidth}
    \includegraphics[width=\textwidth]{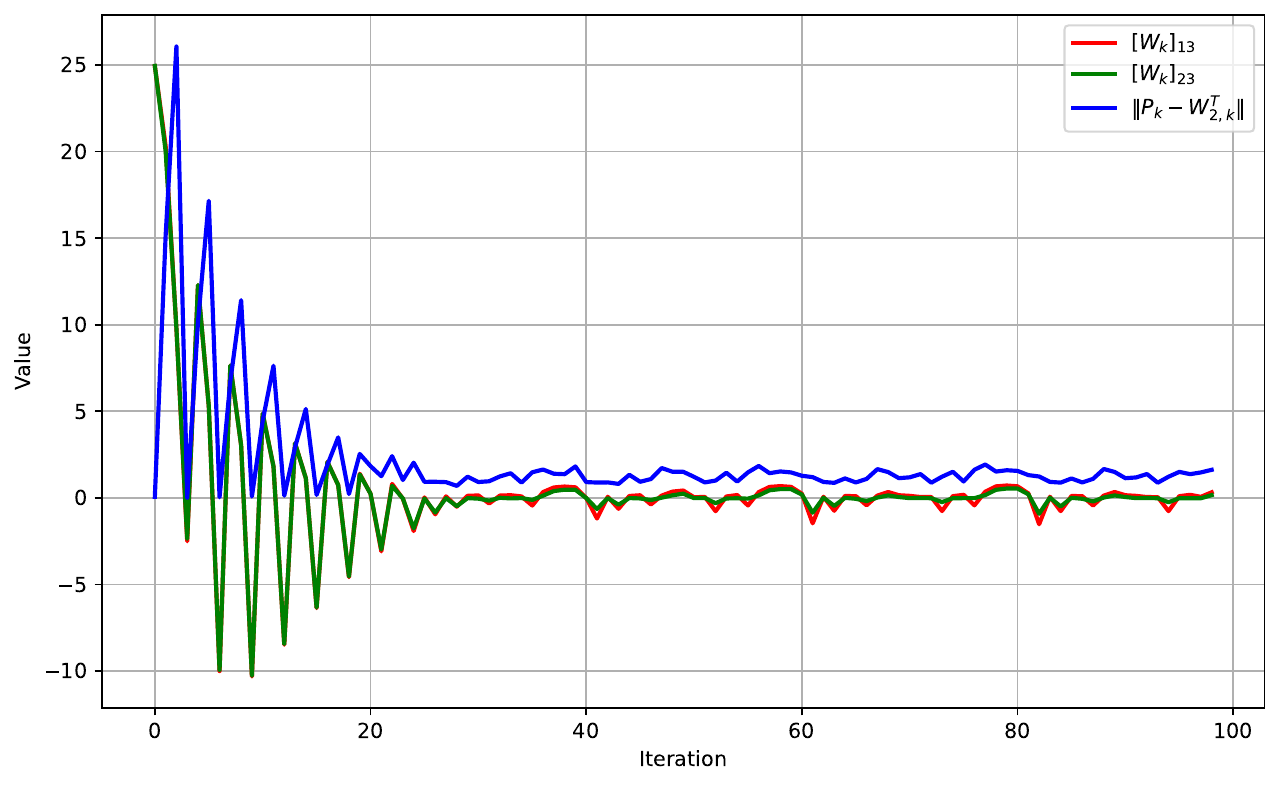}
    \caption{Non-Feasibility when $\eta=0.1$}
    \label{ADMM-l0-10}
    \end{subfigure}
    
    \vspace{0.3cm}

    \begin{subfigure}[b]{0.45\textwidth}
        \includegraphics[width=\textwidth]{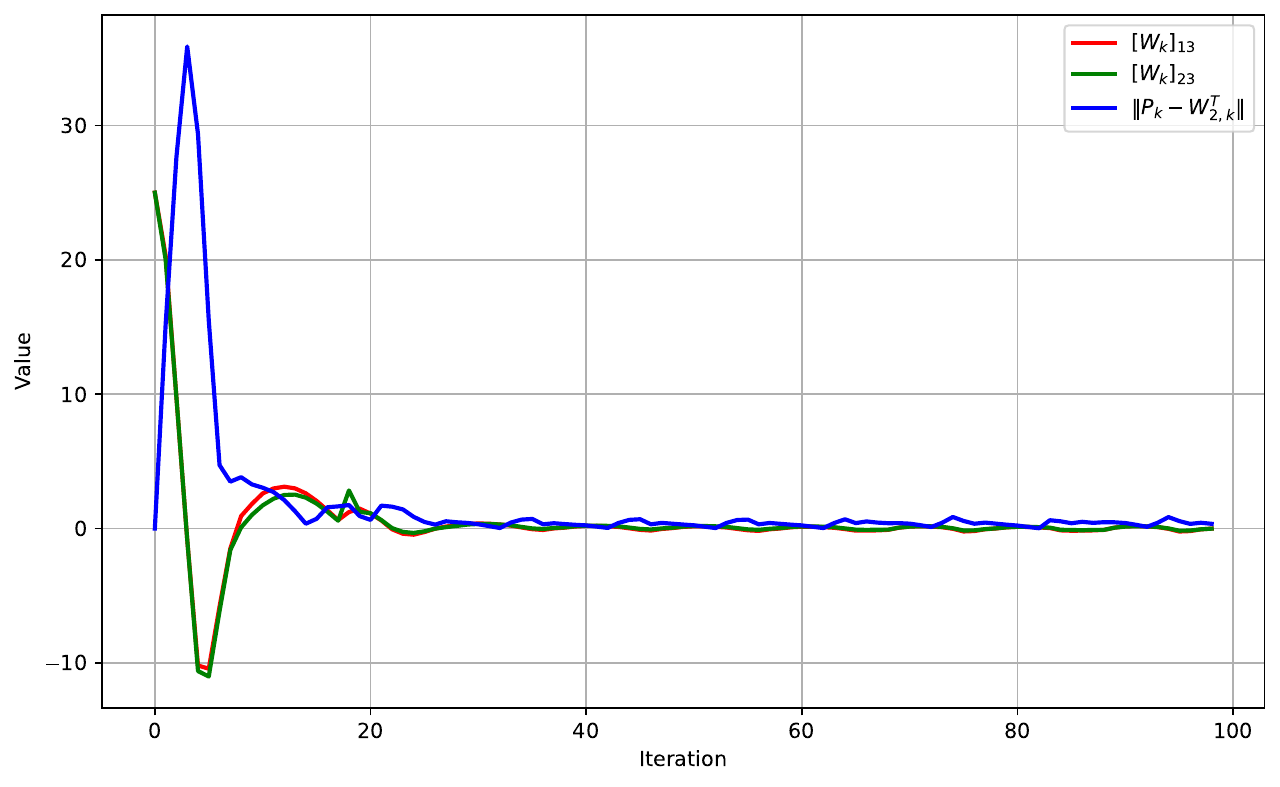}
        \caption{Subsequence Asymptotic Feasibility of $\Vert P_k-W_{2,k}^\top\Vert$ when $\eta=0.01$}
        \label{ADMM-l0-100}
    \end{subfigure}
    \hfill
     \begin{subfigure}[b]{0.45\textwidth}
    \includegraphics[width=\textwidth]{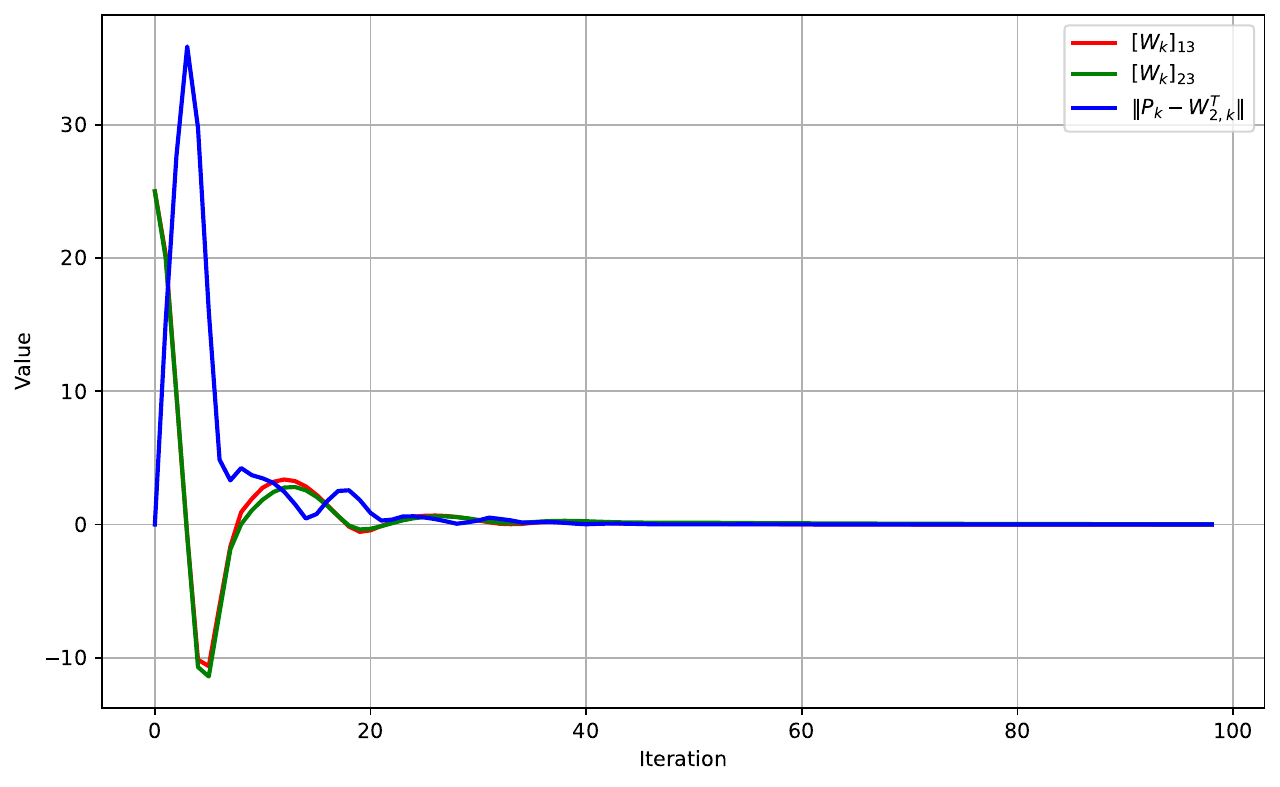}
    \caption{Asymptotic Feasibility when $\eta=1/300$}
    \label{ADMM-l0-300}
    \end{subfigure}
      \vspace{0.3cm}
     \begin{subfigure}[b]{0.45\textwidth}
        \includegraphics[width=\textwidth]{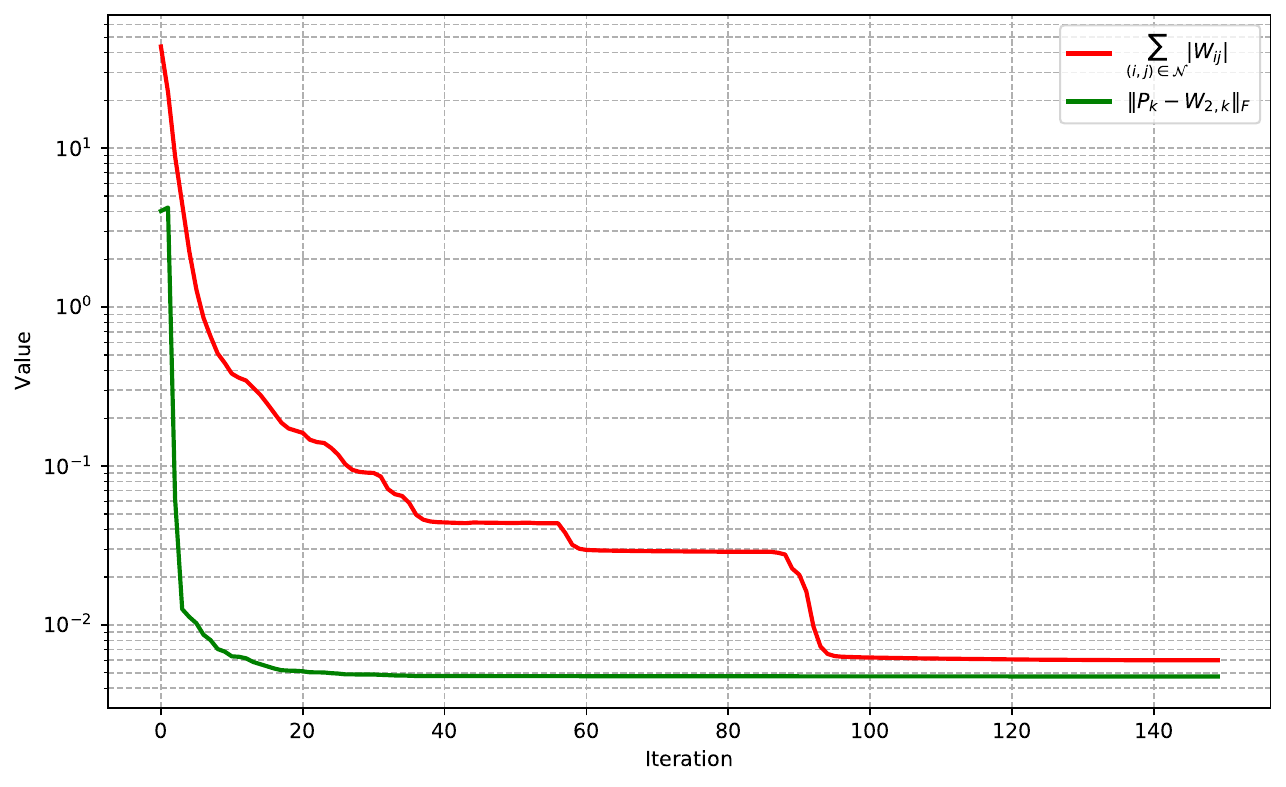}
        \caption{Feasibility of ADMM for Example \ref{exp2new}}
        \label{ADMM-high}
    \end{subfigure}
    \hfill
    \begin{subfigure}[b]{0.45\textwidth}
        \includegraphics[width=\textwidth]{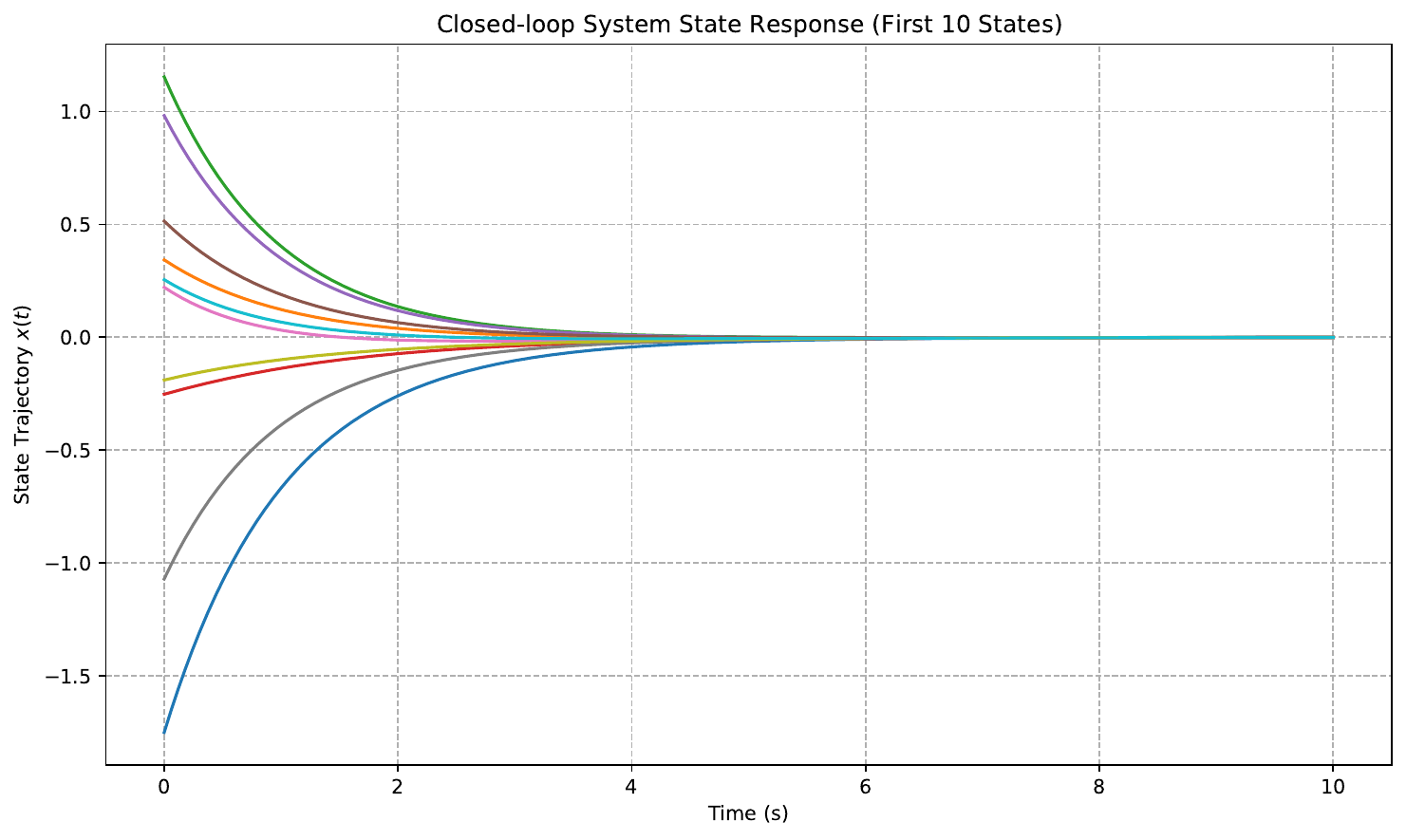}
        \caption{System Response}
        \label{response2}
    \end{subfigure}
        \caption{Figures of Example \ref{exp1} and \ref{exp2new}}
    \label{union1}
\end{figure*}

\section{Conclusion}

In this paper, we studied the DFT-LQ and SF-LQ problems. By reformulating these problems from an epi-composition perspective, we developed a direct approach to handle constraints without relying on penalty terms. We analyzed the convergence of the DR splitting, equivalently ADMM, under mild assumptions. When these fail, we proposed a PSGD algorithm combined with a DC relaxation. The proposed framework enables the design of group-sparse feedback gains with theoretical guarantees, without resorting to convex surrogates or restrictive structural assumptions. 
In future work, we plan to extend our framework to cases involving time-varying communication topologies and time-varying system matrices.

\section*{Appendix}

\subsection*{Proof of Lemma \ref{subgrad_varphi1}}

\begin{proof}
\textbf{(i) Proof of 1) and 2)}

We first prove that $\varphi_1=\mathcal{A}f$ is closed. 
Given $\tilde{s}\in\mathrm{dom}(\varphi_1)$, we consider a sequence $\{s_k\}_{k\geq 0}\subseteq \mathbf{lev}_{\leq\alpha}(\varphi_1)$ for some $\alpha\geq 0$ and such that
$s_k\to\tilde{s}$. Then, it suffices to show $\varphi_1(\tilde{s})\leq\alpha$. By the definition of $\varphi_1$, we may find a sequence $\{\widetilde{W}_k\}_{k\geq 0}$ s.t.
\begin{equation*}
  \mathcal{A}\widetilde{W}_k=s_k~\text{and}~f(\widetilde{W}_k)\leq \varphi_1(s_k)+\epsilon_k
\end{equation*}
with $\epsilon_k\downarrow 0$. 
By the definition of $f$ and Assumption \ref{ass1}, it follows that
\begin{align*}
  f(\widetilde{W})&\geq\langle {\rm vec}(R),\widetilde{W}\rangle=\langle R,W\rangle\geq \lambda_{{\rm min}}(R){\rm tr}(W)\\
  &\geq \lambda_{{\rm min}}(R)\Vert W\Vert_2;
\end{align*}
hence, $f$ is bounded below, while $\{\widetilde{W}_k\}_{k\geq 0}$ is bounded. Thus, without loss of generality, we may assume $\widetilde{W}_k\to \widetilde{W}^*$ and $\mathcal{A}\widetilde{W}^*=\tilde{s}$. By the closedness of $f$, it is obvious that
\begin{equation*}
  \varphi_1(\tilde{s})\leq f(\widetilde{W}^*)\leq \liminf_{k\to \infty} f(\widetilde{W}_k)\leq \liminf_{k\to\infty} \varphi_1(s_k)+\epsilon_k\leq \alpha.
\end{equation*} 
Therefore, $\mathcal{A}f$ is a closed function.  By Theorem 2.4.2 of \cite{hiriart2004fundamentals}, $\mathcal{A}f$ is convex. By definition, $s\in \partial (\mathcal{A}f)(\hat{s})$ iff
\begin{equation*}
  (\mathcal{A}f)(\hat{s}^\prime)\geq (\mathcal{A}f)(\hat{s})+\langle s,\hat{s}^\prime-\hat{s}\rangle,~~\forall \hat{s}^\prime,
\end{equation*}
which may be rewritten as
\begin{equation*}
    (\mathcal{A}f)(\hat{s}^\prime)\geq f(\widetilde{W})+\langle s,\hat{s}^\prime-\mathcal{A}\widetilde{W}\rangle,~~\forall \hat{s}^\prime
\end{equation*}
with $\widetilde{W}$ an arbitrary element in $\widetilde{W}(\hat{s})$. Additionally, due to the fact that $\mathcal{A}$ is surjective, the last relation is equivalent to
\begin{align*}
  f(\widetilde{W}^\prime)&\geq f(\widetilde{W})+\langle s,\mathcal{A}\widetilde{W}^\prime-\mathcal{A}\widetilde{W}\rangle\\
  &=f(\widetilde{W})+\langle \mathcal{A}^\top s,\widetilde{W}^\prime-\widetilde{W}\rangle,~~\forall \widetilde{W}^\prime,
\end{align*}
which means that $\mathcal{A}^\top s\in\partial f(\widetilde{W})$. By Theorem 23.8 of \cite{rockafellar1997convex}, it follows
\begin{equation*}
  \partial f(\widetilde{W})\supseteq \mathrm{vec}(R)+\mathcal{N}_{\Gamma_+^p}(\widetilde{W})+\sum_{i=1}^{M}\mathcal{N}_{\mathcal{H}_i^{-1}(\Gamma_+^n)}(\widetilde{W}).
\end{equation*}

\textbf{(ii) Proof of 3)}

Consider the following optimization problem
\begin{equation*}
 \begin{aligned}
\min_{\widetilde{P}}~~ &\gamma\Vert \pi(\widetilde{P})\Vert_0\\
{\rm s.t.}~~&\mathcal{B}\widetilde{P}=
\begin{bmatrix}
  \mathbf{0}_{N^*\times 1} \\
  -\widetilde{P}
\end{bmatrix}
=-s\doteq -
\begin{bmatrix}
  s_1 \\
  s_2
\end{bmatrix},
\end{aligned}
\end{equation*}
where $s_1= [I_{N^*}~\mathbf{0}_{N^*\times mn}]s$ and $s_2=[\mathbf{0}_{mn\times N^*}~I_{mn}]s$. By the definition of epi-composition function $\mathcal{B}g(-s)$, the global minimizer of the above problem is $\varphi_2(s)$, while by the straightforward computation, we have
\begin{equation*}
  \varphi_2(s)=\gamma \Vert \pi(-[\mathbf{0}_{mn\times N^*}~I_{mn}]s)\Vert_0+\delta_{A(s)}(s)
\end{equation*}
with $A(s)=\{s\colon [I_{N^*}~\mathbf{0}_{N^*\times mn}]s=0\}$. Since both $\ell_0$-norm and indicator function $\delta_{A(s)}(s)$ are closed, we can deduct that $\varphi_2(s)$ is closed.

\textbf{(ii) Proof of 4)}

Consider the following optimization problem
\begin{equation*}
 \begin{aligned}
\min_{\widetilde{W}}~~ &\langle {\rm vec}(R),\widetilde{W}\rangle\\
{\rm s.t.}~~&\mathcal{A}\widetilde{W}=s,~~\widetilde{W}\in\Gamma_+^p,~~\widetilde{\Psi}\in\Gamma_+^n,
\end{aligned}
\end{equation*}
while by the definition of epi-composition function, the global minimizer of the above problem is given by $\varphi_1(s)$. Clearly, the above problem is equivalent to
\begin{equation*}
 \begin{aligned}
\min_{\{W_{1,i}\}_{i=1}^t,W_3} &\langle R,W\rangle\\
{\rm s.t.}~~~~~&W=\begin{bmatrix}
              W_1 & \mathrm{vec}^{-1}(s_2)^\top \\
               \mathrm{vec}^{-1}(s_2) & W_3
             \end{bmatrix}\in\mathbb{S}_+^p,\\
&AW_1-B_2\mathrm{vec}^{-1}(s_2)+W_1A^\top-\mathrm{vec}^{-1}(s_2)^\top B_2^\top\\
&+B_1B_1^\top \in\mathbb{S}^n_-,\\
&W_1=\mathbf{blockdiag}\{W_{1,1},\dots,W_{1,t}\}+\mathbf{mat}(s_1).
\end{aligned}
\end{equation*}
Here, $\mathbf{blockdiag}\{W_{1,1},\dots,W_{1,t}\}$ denotes the block diagonal matrix with diagonal entries $W_{1,1},\dots,W_{1,t}$, and $\mathbf{mat}(s_1)$ denotes inserting $s_1$ into the off-block-diagonal positions of the matrix $W_1$, according to the definition of the matrix $\widehat{\mathcal{A}}$. Obviously, the negative semi-definite cone constraint is equivalent to the following Lyapunov matrix equation
\begin{equation*}
\begin{aligned}
(A-B_{2}\mathrm{vec}^{-1}(s_2) W_1^{-1})W_1&+W_1(A-B_{2}\mathrm{vec}^{-1}(s_2) W_1^{-1})^\top\\
  &+B_1B_1^\top\preceq 0.
\end{aligned}
\end{equation*}
By utilizing the properties of the Lyapunov matrix equation and the Schur complement lemma, the nonemptiness of the feasible set is equivalent to the solvability of:
\begin{equation*}
  A-B_2 \mathrm{vec}^{-1}(s_2) W_1^{-1} \text{~is Hurwitz}.
\end{equation*}
Given that the block-diagonal entries $\{W_{1,i}\}_{i=1}^t$ of $W_1$ are free variables, 
by Gershgorin circle theorem and choosing $\{W_{1,i}\}_{i=1}^t$ to be sufficiently large, the minimum eigenvalue of $W_1$ can be made arbitrarily large.  As a result, $B_2 \mathrm{vec}^{-1}(s_2) W_1^{-1}$ can be made arbitrarily small in norm. Due to the continuity of eigenvalues with respect to matrix perturbations, it follows that $A - B_2 \mathrm{vec}^{-1}(s_2) W_1^{-1}$ remains Hurwitz since $A$ is Hurwitz.
Hence, the domain $\mathrm{dom}(\varphi_1)$ equals $\mathbb{R}^{N^*+mn}$ provided that $A$ is a Hurwitz matrix.

Conversely, it can be shown that if $A$ is not a Hurwitz matrix, then $\mathrm{dom}(\varphi_1)$ is a strict subset of $\mathbb{R}^{N^*+mn}$. Assume
\begin{equation*}
  P^{-1}AP=\begin{bmatrix}
J_1 & & \\
& \ddots & \\
& & J_k
\end{bmatrix}.
\end{equation*}
In this decomposition, $J_1, \dots, J_k$ are the Jordan blocks associated with the eigenvalues $\Re(\lambda_1) \leq \dots \leq \Re(\lambda_k)$ of the matrix $A$. The existence of an invertible matrix $P$ is guaranteed by the Jordan decomposition theorem. Denote
\begin{equation*}
  P^{-1}B_2\doteq\begin{bmatrix}
                   b_1 \\
                   b_2 \\
                   \vdots\\
                   b_n
                 \end{bmatrix},
~~\mathrm{vec}^{-1}(s_2)\doteq
\begin{bmatrix}
  w_1 & w_2 &\cdots & w_n
\end{bmatrix}
\end{equation*}
with $\forall i\in[n], b_i\in\mathbb{R}^{1\times m},w_i\in\mathbb{R}^m$. Owing to the arbitrariness of $\{w_i\}_{i=1}^n$, we may, without loss of generality, assume that $w_i \in \ker(b_n)$ for all $i = 1, \dots, n$. Then, we have
\begin{equation*}
  P^{-1}B_2\mathrm{vec}^{-1}(s_2)=\begin{bmatrix}
      \ast  \\
      0
   \end{bmatrix},
\end{equation*}
and thus
\begin{equation*}
P^{-1}(A-B_2 \mathrm{vec}^{-1}(s_2) W_1^{-1} )P=\begin{bmatrix}
* & *& *& *\\
0 &\cdots & 0 &\lambda_k
\end{bmatrix}.
\end{equation*}
Since $\Re(\lambda_k) > 0$, it follows that $\mathrm{dom}(\varphi_1) \neq \mathbb{R}^{N^*+mn}$.
\end{proof}

\subsection*{Proof of Theorem \ref{local-minimal}}

\begin{lemma}\label{regular}
For any closed convex set $\mathcal{S}$, let $t(s_2)\doteq \gamma \Vert \pi(s_2)\Vert_0+\delta_{\mathcal{S}}(s_2)$. It holds that $t$ is regular at any $s_2\in\mathbb{R}^{mn}$.
\end{lemma}

\begin{proof}
For any $w\in [st]$, we denote 
$$P_\w\doteq \{s_2\colon [\pi(s_2)]_i=0,\forall i\in w^c\}.$$
Denoting $\w= \mathrm{supp}(\pi(s_2))$, the regular subdifferential of $t$ at $s_2$ exhibits the following form
\begin{align*}
	&~~\widehat{\partial} t(s_2)  \\
&=\left\{ \theta \colon \liminf_{\underset{s_2^\prime \neq s_2}{s_2^\prime \rightarrow s_2} } \frac{t(s_2^\prime) - t(s_2) - \langle \theta, s_2^\prime-s_2\rangle }{\| s_2^\prime-s_2\|}\geq 0 \right\} \\
& = \left\{ \theta\colon \liminf_{\underset{s_2^\prime \neq s_2}{s_2^\prime \rightarrow s_2, t(s_2^\prime) = t(s_2) } } \frac{ - \langle \theta, s_2^\prime-s_2\rangle}{\| s_2^\prime-s_2 \|} \geq 0\right\} \\
& =  \left\{\theta\colon \liminf_{\underset{s_2^\prime \neq s_2}{s_2^\prime \rightarrow s_2, s_2^\prime \in \mathcal{S} \cap P_{\w} } } \frac{- \langle \theta, s_2^\prime-s_2\rangle}{\| s_2^\prime-s_2 \|}\geq 0  \right\}\\
 & = \left\{ \theta\colon  \liminf_{\underset{s_2^\prime \neq s_2}{s_2^\prime \rightarrow s_2, } } \frac{ \delta_{\mathcal{S} \cap P_{\w}}(s_2^\prime) -\delta_{\mathcal{S} \cap P_{\w} }(s_2) - \langle \theta, s_2^\prime-s_2 \rangle}{\|s_2^\prime-s_2 \|}\geq 0 \right\} \\
 & =  \widehat{\partial} \delta_{\mathcal{S} \cap P_{\w}} (s_2).
\end{align*}
Here, if $s_2^\prime\to s_2$ and $\mathrm{supp}(\pi(s_2^\prime))\supsetneq \mathrm{supp}(\pi(s_2))$, then $t(s_2^\prime)\geq t(s_2)+\gamma$; hence the second equality holds.
By the same token, the limiting subdifferential of $t$ at $s_2$ is
\begin{align*}
\partial t(s_2)=& \left \{ \theta\colon \exists\, s_2^\prime \rightarrow s_2, t(s_2^\prime) = t(s_2), \widehat{\partial}t(s_2^\prime) \ni \theta^\prime  \rightarrow \theta \right\}\\
=& \partial \delta_{\mathcal{S}\cap P_{\w}}(s_2).
\end{align*}
Hence, by Proposition 8.12 of \cite{rockafellar-book}, $\widehat{\partial} t(s_2)=\partial t(s_2)$. Again, by the same argument, the horizon subdifferential of $t_2$ at $s_2$ admits:
\begin{align*}
\partial^{\infty} t(s_2) =  \partial^{\infty} \delta_{\mathcal{S}\cap P_{\w}}(s_2) = \partial \delta_{\mathcal{S} \cap P_{\w}}(s_2) ^{\infty}  = \partial t(s_2) ^{\infty},
\end{align*}
where for a set $C$, $C^\infty$ denotes the horizon cone of $C$; see Definition 3.3 of \cite{rockafellar-book}.
In summary, by Corollary 8.11 of \cite{rockafellar-book}, $t$ is regular at $s_2$.
\end{proof}

We are now ready to complete the proof of Theorem \ref{local-minimal}.

\begin{proof}
In fact, the theorem is a direct corollary of a more general result established here, which applies to problem 
\begin{equation}\label{pf1}
  \min_{s_2\in\mathcal{S}}~\zeta(s_2)
\end{equation}
with arbitrary closed convex $\mathcal{S}$. For any subset $\w \subseteq [st]$, define the axillary problem
\begin{equation}\label{pf2}
  \begin{aligned}
  \min_{s_2\in\mathcal{S}}~&\widehat{\varphi}_1(s_2),\\
  \text{s.t.}~& [\pi(s_2)]_i=0,~i\in\w^c,
  \end{aligned}
  \tag{$Q_{\w}$}
\end{equation} 
and denote $\mathcal{L}\doteq\{s_2\colon s_2\text{~is a local minimizer of problem \eqref{pf1}}\}$.

First, we claim that
\begin{equation}\label{pf3}
  \mathcal{L}=\mathcal{L}^\prime\doteq\bigcup_{\w \subseteq [st]} \{s_2\colon s_2 \text{~is a global minimizer of \eqref{pf2}}\}.
\end{equation}
Indeed, it is straightforward to show $\mathcal{L}^\prime\subseteq \mathcal{L}$; hence it suffices to prove $\mathcal{L}\subseteq\mathcal{L}^\prime$. For any ${s}_2^*\in\mathcal{L}$, ${s}_2^*$ is a local minimizer of the following convex problem
\begin{equation}\label{pf4}
  \min_{s_2\in\mathcal{S}}~\widehat{\varphi}_1(s_2),~\text{s.t.}~[\pi(s_2)]_i=0,\forall i\in (\w^*)^c
\end{equation}
with $\w^*\doteq \mathrm{supp}(\pi({s}_2^*))$. Problem \eqref{pf4} is convex by Lemma \ref{subgrad_varphi1}; hence ${s}_2^*$ is also a global minimizer of \eqref{pf4} and claim \eqref{pf3} holds.

Defining $\bar{\w}\doteq \mathrm{supp}(\pi(\bar{s}_2))$, according to Lemma \ref{regular}, it holds that
\begin{align*}
  0 &\in \partial (\widehat{\varphi}_1(\bar{s}_2)+\gamma \Vert \pi(\bar{s}_2)\Vert_0+\delta_{\mathcal{S}}(\bar{s}_2))\\
  &= \partial \widehat{\varphi}_1(\bar{s}_2) + \partial \delta_{\mathcal{S}\cap P_{\bar{\w}}}(\bar{s}_2)\\
  &=\partial (\widehat{\varphi}_1(\bar{s}_2) +  \delta_{\mathcal{S}\cap P_{\bar{\w}}}(\bar{s}_2)),
\end{align*}
where the equalities hold by Corollary 10.9 of \cite{rockafellar-book}. Since problem \eqref{pf2} is convex, $\bar{s}_2$ is a global minimizer of \eqref{pf2}; hence by \eqref{pf3}, $\bar{s}_2$ is a local minimizer of \eqref{pf1}.
\end{proof}

\subsection*{Proof of Theorem \ref{smoothDR}}

Define function $\widehat{\varphi}_\eta\colon \mathbb{R}^{mn}\to\mathbb{R}$:
\begin{align*}
  \widehat{\varphi}_\eta(s_2)\doteq \min_{w\in\mathcal{S}^*}&\bigg\{ \widehat{\varphi}_2(w)+\widehat{\varphi}_1(u)+\langle \nabla \widehat{\varphi}_1(u),w-u\rangle\\
  &~~+\frac{1}{2\eta}\Vert w-u\Vert^2\bigg\}
\end{align*}
with $u\doteq \mathbf{prox}_{\eta \widehat{\varphi}_1}(s_2)$. By Assumption \ref{lsmooth}, $\nabla \widehat{\varphi}_1(s_2)$ is well-defined for any $s_2\in\mathcal{S}^*$.
We first prove the following sufficient descent lemma.

\begin{lemma}\label{desecent_lemma}
There exists positive constant $c$ satisfying
\begin{equation}\label{descentlemmabound}
  \widehat{\varphi}_\eta(s_{2;k})-\widehat{\varphi}_\eta(s_{2;k+1})\geq \frac{c}{(1+\eta L_{\widehat{\varphi}_1})^2}\Vert s_{2;k}-s_{2;k+1}\Vert^2.
\end{equation}
\end{lemma}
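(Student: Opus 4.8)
The plan is to use $\varphi_\eta$ as a Lyapunov (merit) function for the Douglas--Rachford iteration and to show it decreases at every step. First I would record that, since $\varphi_1$ is convex and $L_{\varphi_1}$-smooth on $\mathrm{dom}(\varphi_1)$ (Theorem \ref{subgrad_varphi1} and Assumption \ref{lsmooth}), the prox $u_{n+1}=\mathbf{prox}_{\eta\varphi_1}(s_n)$ is well-defined, lies in $\mathrm{dom}(\varphi_1)$, and satisfies $s_n=u_{n+1}+\eta\nabla\varphi_1(u_{n+1})$; hence $2u_{n+1}-s_n=u_{n+1}-\eta\nabla\varphi_1(u_{n+1})$, and the first-order condition for the inner minimization defining $\varphi_\eta(s_n)$ is solved exactly by $v_{n+1}=\mathbf{prox}_{\eta\varphi_2}(2u_{n+1}-s_n)$. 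This identifies the DRE minimizer and lets me write $\varphi_\eta(s_n)=\varphi_1(u_{n+1})+\langle\nabla\varphi_1(u_{n+1}),v_{n+1}-u_{n+1}\rangle+\varphi_2(v_{n+1})+\frac{1}{2\eta}\|v_{n+1}-u_{n+1}\|^2$ in closed form.

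The crucial step, designed to respect the domain restriction flagged in Remark \ref{rem2}, is to upper-bound $\varphi_\eta(s_{n+1})$ without ever evaluating $\varphi_1$ outside $\mathrm{dom}(\varphi_1)$. Since $\varphi_\eta(s_{n+1})$ is itself an infimum centered at the in-domain point $u_{n+2}=\mathbf{prox}_{\eta\varphi_1}(s_{n+1})$, I would feed in the feasible test vector $w=v_{n+1}$ to get $\varphi_\eta(s_{n+1})\le\varphi_1(u_{n+2})+\langle\nabla\varphi_1(u_{n+2}),v_{n+1}-u_{n+2}\rangle+\varphi_2(v_{n+1})+\frac{1}{2\eta}\|v_{n+1}-u_{n+2}\|^2$. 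Upon subtracting, the possibly infinite and nonconvex terms $\varphi_2(v_{n+1})$ cancel exactly, and what remains involves $\varphi_1$ only at $u_{n+1}$ and $u_{n+2}$, both in $\mathrm{dom}(\varphi_1)$ where $\varphi_1$ is differentiable. This cancellation is precisely what keeps the analysis valid where the envelope-gradient argument of \cite{themelis2020douglas} breaks down.

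Next I would invoke the refined inequality for a convex $L_{\varphi_1}$-smooth function, $\varphi_1(u_{n+1})\ge\varphi_1(u_{n+2})+\langle\nabla\varphi_1(u_{n+2}),u_{n+1}-u_{n+2}\rangle+\frac{1}{2L_{\varphi_1}}\|\nabla\varphi_1(u_{n+1})-\nabla\varphi_1(u_{n+2})\|^2$ (legitimate, as both points lie in $\mathrm{dom}(\varphi_1)$), together with the prox identities $s_n=u_{n+1}+\eta\nabla\varphi_1(u_{n+1})$, $s_{n+1}=u_{n+2}+\eta\nabla\varphi_1(u_{n+2})$ and the update $s_{n+1}-s_n=\xi(v_{n+1}-u_{n+1})$. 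Writing $r=v_{n+1}-u_{n+1}$ and $d=u_{n+2}-u_{n+1}$, these relations let me eliminate the gradients through $\nabla\varphi_1(u_{n+2})-\nabla\varphi_1(u_{n+1})=\frac{1}{\eta}(\xi r-d)$ and collapse $\varphi_\eta(s_n)-\varphi_\eta(s_{n+1})$ into a quadratic form in $r$ and $d$. I would bound this form below using firm nonexpansiveness and co-coercivity of $\mathbf{prox}_{\eta\varphi_1}$ and $\nabla\varphi_1$ (which supply $\xi\langle r,d\rangle\ge\|d\|^2$ and control $\|r\|$ against $\|d\|$), and verify that the stepsize window $\xi\in(0,2)$, $\eta<\min\{\frac{2-\xi}{2L_{\varphi_1}},\frac{1}{L_{\varphi_1}}\}$ renders the form coercive, giving $\varphi_\eta(s_n)-\varphi_\eta(s_{n+1})\ge c_1\|u_{n+2}-u_{n+1}\|^2$ for some $c_1>0$.

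Finally, to convert from $\|u_{n+2}-u_{n+1}\|$ to $\|s_n-s_{n+1}\|$, I would use the Lipschitz gradient once more: $\|s_{n+1}-s_n\|=\|(u_{n+2}-u_{n+1})+\eta(\nabla\varphi_1(u_{n+2})-\nabla\varphi_1(u_{n+1}))\|\le(1+\eta L_{\varphi_1})\|u_{n+2}-u_{n+1}\|$, hence $\|u_{n+2}-u_{n+1}\|^2\ge(1+\eta L_{\varphi_1})^{-2}\|s_n-s_{n+1}\|^2$; substituting yields \eqref{descentlemmabound} with $c=c_1$. This conversion is exactly where the factor $(1+\eta L_{\varphi_1})^{-2}$ appears, reflecting that progress must be measured through the in-domain iterates $u_n$ rather than directly through $s_n$. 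I expect the main obstacle to be the third step: checking that the residual quadratic form is genuinely coercive (not merely sign-indefinite) over the whole stepsize range, since the plain convexity bound leaves a negative $\|r\|^2$ contribution that has to be absorbed using the nonexpansiveness relation $\xi\|r\|\le(1+\eta L_{\varphi_1})\|d\|$ together with the precise smoothness/stepsize trade-off.
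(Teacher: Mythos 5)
Your proposal is correct and follows essentially the same route as the paper's proof: both plug the test point $v_{n+1}$ into the infimum defining $\varphi_\eta(s_{n+1})$, cancel the $\varphi_2(v_{n+1})$ terms, control the $\varphi_1$ difference via a smoothness/interpolation inequality at the in-domain points $u_{n+1},u_{n+2}$, use $s_{n+1}-s_n=\xi(v_{n+1}-u_{n+1})$ to reduce everything to $\Vert u_{n+1}-u_{n+2}\Vert^2$, and finish with the $(1+\eta L_{\varphi_1})$-Lipschitz relation between $s$ and $u$. The only cosmetic difference is that you use the convex co-coercivity inequality where the paper uses the more general two-parameter interpolation function $\rho(y,x)$ from Themelis--Patrinos, and you sketch the coercivity check that the paper defers to their Theorem 4.1.
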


\begin{proof}
Combining the definition of proximal operator, $u_{k+1}\in\mathbf{prox}_{\eta\widehat{\varphi}_1}(s_{2;k+1})$ and according to Proposition 2.3 of \cite{themelis2020douglas}, we can deduce that $s_{2;k+1}=u_{k+1}+\eta\nabla \widehat{\varphi}_1(u_{k+1})$. Upon
\begin{equation*}
\begin{aligned}
  \widehat{\varphi}_\eta(s_{2;k+1})= \min_{w\in\mathcal{S}^*}&\bigg\{ \widehat{\varphi}_2(w)+\widehat{\varphi}_1(u_{k+1})+\langle \nabla \widehat{\varphi}_1(u_{k+1}),\\
  &~~w-u_{k+1}\rangle+\frac{1}{2\eta}\Vert w-u_{k+1}\Vert^2\bigg\},
\end{aligned}
\end{equation*}
it can be readily verified that $v_{k+1}$ is an optimal solution of the above problem. For $x,y\in\mathcal{S}^*$, let
\begin{equation*}
  \rho(y,x)=\frac{\sigma L}{2(L+\sigma)}\Vert y-x\Vert^2+\frac{1}{2(L+\sigma)}\Vert \nabla p_1(y)-\nabla p_1(x) \Vert^2
\end{equation*}
with $L\geq L_{\widehat{\varphi}_1}$ and $\sigma\in[-L,-L_{\widehat{\varphi}_1}]$. Thus, it follows that
  \begin{align*}
  &~~~~\widehat{\varphi}_\eta(s_{2;k+1})\\
  &\leq  \widehat{\varphi}_2(v_{k})+\widehat{\varphi}_1(u_{k+1})+\langle \nabla \widehat{\varphi}_1(u_{k+1}),v_{k}-u_{k+1}\rangle\\
  &~~~~+\frac{1}{2\eta}\Vert v_{k}-u_{k+1}\Vert^2\\
  &\overset{\mathrm{(\romannumeral1)}}{\leq}-\rho(u_{k},u_{k+1})+\langle \nabla \widehat{\varphi}_1(u_{k+1}),v_{k}-u_{k}\rangle\\
  &~~~~+\widehat{\varphi}_1(u_{k+1})+\widehat{\varphi}_2(v_{k})+\frac{1}{2\eta}\Vert v_{k}-u_{k+1}\Vert^2\\
  &\overset{\mathrm{(\romannumeral2)}}{=}\widehat{\varphi}_\eta(s_{2;k})-\rho(u_{k},u_{k+1})\\
  &~~~~+\langle \nabla \widehat{\varphi}_1(u_{k+1})-\nabla \widehat{\varphi}_1(u_{k}),v_{k}-u_{k}\rangle\\
  &~~~~\frac{1}{2\eta}\Vert u_{k}-u_{k+1}\Vert^2+\frac{1}{\eta}\langle u_{k+1}-u_{k},u_{k}-v_{k}\rangle,
  \end{align*}
where inequality (\romannumeral1) holds due to the $L_{\widehat{\varphi}_1}$-smoothness of $\widehat{\varphi}_1$, and equality (\romannumeral2) follows from the definition of $\widehat{\varphi}_\eta$. By Proposition 2.3 of \cite{themelis2020douglas} and Assumption \ref{lsmooth}, $u_{k}\in\mathbf{prox}_{\eta\widehat{\varphi}_1}(s_{2;k})$ implies $s_{2;k}=u_{k}+\eta \nabla\widehat{\varphi}_1(u_{k})$; by the same token, 
we know that $s_{2;k+1}=u_{k+1}+\eta \nabla\widehat{\varphi}_1(u_{k+1})$. Thus, it follows that 
$$\frac{1}{\xi}(s_{2;k}-s_{2;k+1})=\frac{1}{\xi}(u_{k}-u_{k+1})+\frac{\eta}{\xi}(\nabla \widehat{\varphi}_1(u_{k})-\nabla \widehat{\varphi}_1(u_{k+1})).$$
Therefore, we have
\begin{equation*}
\begin{aligned}
  &\widehat{\varphi}_\eta(s_{2;k})-\widehat{\varphi}_\eta(s_{2;k+1})
  \geq\frac{2-\xi}{2\eta\xi}\Vert u_{k}-u_{k+1}\Vert^2\\
  &-\frac{\eta}{\xi}\Vert \nabla \widehat{\varphi}_1(u_{k})-\nabla \widehat{\varphi}_1(u_{k+1})\Vert^2+\rho(u_{k},u_{k+1}).
\end{aligned}
\end{equation*}
Based on the definition of $\rho$, we know
\begin{equation*}
  \widehat{\varphi}_\eta(s_{2;k})-\widehat{\varphi}_\eta(s_{2;k+1})\geq c\Vert u_{k}-u_{k+1}\Vert^2.
\end{equation*}
Since $\mathbf{prox}_{\eta \widehat{\varphi}_1}$ is $\frac{1}{\eta L_{\widehat{\varphi}_1}}$-strongly monotone, \eqref{descentlemmabound} holds. It remains to prove $c>0$, which follows by an argument analogous to that in the proof of Theorem 4.1 in \cite{themelis2020douglas}, and is therefore omitted here.
\end{proof}

With the above lemma established, we now return to the proof of Theorem \ref{smoothDR}.

\begin{proof}
To exclude the trivial case, we assume without loss of generality that $u_k \neq v_k$ for all $k$.
By Lemma \ref{desecent_lemma}, it holds that 
  \begin{align*}
    &\frac{c\xi}{(1+\eta L_{\widehat{\varphi}_1})^2}\sum_{k\geq 0}\Vert u_{k}-v_{k}\Vert^2\\
=&\frac{c}{(1+\eta L_{\widehat{\varphi}_1})^2}\sum_{k\geq 0}\Vert s_{2;k}-s_{2;k+1}\Vert^2\\
\leq&  \sum_{k\geq 0}(\widehat{\varphi}_\eta(s_{2;k})-\widehat{\varphi}_\eta(s_{2;k+1}))\\
\leq& \widehat{\varphi}_\eta(s_{2;0}) - \inf_{s_2\in\mathcal{S}^*} \widehat{\varphi}_\eta(s_2)
  \end{align*}
  with $c>0$. By Assumption \ref{lsmooth}, $\widehat{\varphi}_1$ is $L_{\widehat{\varphi}_1}$-smooth on $\mathcal{S}^*$; hence, we can show that
  \begin{align*}
  &\widehat{\varphi}_\eta(s_2)\\
=& \min_{w\in\mathcal{S}^*}\bigg\{ \widehat{\varphi}_2(w)+\widehat{\varphi}_1(u)+\langle \nabla \widehat{\varphi}_1(u),w-u\rangle+\frac{1}{2\eta}\Vert w-u\Vert^2\bigg\}\\
\geq& \min_{w\in\mathcal{S}^*}\bigg\{ \widehat{\varphi}_2(w)+\widehat{\varphi}_1(w)-\frac{L_{\widehat{\varphi}_1}}{2}\Vert w-u\Vert^2+\frac{1}{2\eta}\Vert w-u\Vert^2\bigg\}\\
=&\min_{w\in\mathcal{S}^*}\bigg\{ \widehat{\varphi}_2(w)+\widehat{\varphi}_1(w)+\underbrace{\left(\frac{1}{2\eta}-\frac{L_{\widehat{\varphi}_1}}{2}\right)}_{\geq 0\text{~by the selection rule of~}\eta}\Vert w-u\Vert^2\bigg\}\\
\geq &\min_{w\in\mathcal{S}^*}\bigg\{ \widehat{\varphi}_2(w)+\widehat{\varphi}_1(w)\bigg\}\geq 0
\end{align*}
with $u=\mathbf{prox}_{\eta \widehat{\varphi}_1}(s_2)$. Therefore, we can deduce that $\inf\limits_{s_2\in\mathcal{S}^*} \widehat{\varphi}_\eta(s_2)>-\infty$, $\{u_k-v_k\}_{k\geq 1}$ is square summable and $u_k-v_k\to 0$ as $k\to\infty$, which necessitates $\{u_k\}_{k\geq 0}$ and $\{v_k\}_{k\geq 0}$ share same cluster points. We suppose $\{u_k\}_{k\in\mathbb{K}}\to u^\prime$ for some $\mathbb{K}\subseteq \mathbb{N}$. Then, $\{v_k\}_{k\in\mathbb{K}}\to u^\prime$. As mentioned above, $u_{k}\in\mathbf{prox}_{\eta\widehat{\varphi}_1}(s_{2;k})$ implies $s_{2;k}=u_{k}+\eta \nabla\widehat{\varphi}_1(u_{k})$.
Henceforth, it follows that
\begin{align*}
  u^\prime = \lim_{k\in\mathbb{K},k\to\infty} v_k&{\in}\limsup_{k\in\mathbb{K},k\to\infty} \mathbf{prox} _{\eta\widehat{\varphi}_2}(2u_{k}-s_{2;k})\notag\\
  &\overset{\rm (i)}{\in}\limsup_{k\in\mathbb{K},k\to\infty} \mathbf{prox} _{\eta\widehat{\varphi}_2}(u_k-\eta\nabla\widehat{\varphi}_1(u_k))\notag\\
  &\overset{\rm (ii)}{\subseteq} \mathbf{prox} _{\eta\widehat{\varphi}_2}(u^\prime-\eta\nabla\widehat{\varphi}_1(u^\prime)).
  \end{align*}
Here, (i) holds by $s_{2;k}=u_{k}+\eta \nabla\widehat{\varphi}_1(u_{k})$, while (ii) holds by the outer semi-continuity of $\mathbf{prox} _{\eta\widehat{\varphi}_2}$; see Example 5.23(b) of \cite{rockafellar-book} (and $\ell_0$-norm is naturally proximally bounded with infinitely large threshold).
Therefore, by Theorem 10.1 of \cite{rockafellar-book}, we obtain
\begin{equation*}
  0\in\nabla \widehat{\varphi}_1(u^\prime)+\widehat{\partial}\widehat{\varphi}_2(u^\prime).
\end{equation*}
We know
\begin{align*}
{\partial}\zeta (u^\prime)\supseteq\widehat{\partial}\zeta (u^\prime)&\overset{\mathrm{(\romannumeral1})}{\supseteq} \nabla \widehat{\varphi}_1(u^\prime)+\widehat{\partial}\delta_{\mathcal{S}^*}(u^\prime)+\widehat{\partial}\widehat{\varphi}_2(u^\prime)\\
&\overset{\mathrm{(\romannumeral2})}{=}\nabla \widehat{\varphi}_1(u^\prime)+{\partial}\delta_{\mathcal{S}^*}(u^\prime)+\widehat{\partial}\widehat{\varphi}_2(u^\prime)\\
&\overset{\mathrm{(\romannumeral3})}{=}\nabla \widehat{\varphi}_1(u^\prime)+\mathcal{N}_{\mathcal{S}^*}(u^\prime)+\widehat{\partial}\widehat{\varphi}_2(u^\prime)\\
&\overset{\mathrm{(\romannumeral4})}{\ni} 0,
\end{align*}
where by Corollary 10.9 of \cite{rockafellar-book}, inequality (\romannumeral1) holds.
Since $\mathcal{S}^*$ is a closed convex set, the operator $\partial \delta_{\mathcal{S}^*}(\cdot) = \widehat{\partial} \delta_{\mathcal{S}^*}(\cdot)$, and thus equality (\romannumeral2) holds.
By Example 3.5 of \cite{beck2019optimization}, equality (\romannumeral3) holds.
From the definition of normal cone $\mathcal{N}_{\mathcal{S}^*}(\cdot)$, we know that $0 \in \mathcal{N}_{\mathcal{S}^*}(u^\prime)$.
Combining that $\widehat{\varphi}_1$ and $\widehat{\varphi}_2$ are semi-algebraic (by Example 2 of \cite{bolte2014proximal}), the theorem is proved by Theorem 4.4 of \cite{themelis2020douglas}.
\end{proof} 

\subsection*{Proof of Theorem \ref{thm_app}}

\begin{proof}
Obviously, we have 
$$\vartheta_{\alpha_{k+1},h}(s_2)\geq\vartheta_{\alpha_k,h}(s_2), ~~\forall s_2\in\R^{mn}.$$ 
Hence, it follows that 
$$\zeta_{\alpha_{k+1},h}(s_2)\geq\zeta_{\alpha_k,h}(s_2),~~\forall s_2\in\R^{mn}.$$ 
By Proposition 7.4 of \cite{rockafellar-book}, it follows that
\begin{equation*}
  \zeta_{\alpha_k,h}(s_2) \xrightarrow{e} \left(\sup_{k}\mathbf{cl}[\zeta_{\alpha_k,h}]\right)(s_2),
\end{equation*}
where the definition of epi-convergence and lower closure operator $\mathbf{cl}[\cdot]$ can be found in Definition 7.1 and Page 14 of \cite{rockafellar-book}, respectively.
Remarkably, there exists a constant $c_{f,\alpha_k}$, such that  $\lim\limits_{\alpha_k\to 0} \vartheta_{\alpha_k,h}(s_2)=c_{h,\alpha_k} \Vert \pi(s_2) \Vert_0$. 
Thus, it follows that $$\left(\sup_{k}\mathbf{cl}[\zeta_{\alpha_k,h}]\right)(s_2)=\zeta(s_2),~~\forall s_2\in\R^{mn}.$$ 
In fact, for every $\alpha_k$, $\zeta_{\alpha_k,h}(s_2)$ is a coercive function.
Indeed, if $\Vert s_{2,n}\Vert\to\infty$, letting
\begin{equation*}
  \widetilde{W}^*_n\in\argmin_{\mathcal{A}\widetilde{W}=Us_{2,n}}~f(\widetilde{W})=\langle \widetilde{R},\widetilde{W}\rangle+\text{indicator functions},
\end{equation*}
then $\Vert\widetilde{W}^*_n\Vert_F\to\infty$. By Assumption \ref{ass1}, we know $f(\widetilde{W}_n^*)\to\infty$; hence $\widehat{\varphi}_1(s_{2,n})\to\infty$ and $\zeta_{\alpha_k,h}$ is coercive.
According to Exercise 7.32 of \cite{rockafellar-book}, the sequence $\{\zeta_{\alpha_k,h}(s_2)+\delta_{\R^{mn}}(s_2)\}_{k\geq 0}$ is eventually level-bounded. By Lemma \ref{subgrad_varphi1}, $\zeta_{\alpha_k,h}$ and $\zeta$ are closed and proper; hence, we finish the proof by Theorem 7.33 of \cite{rockafellar-book}.
\end{proof}

\subsection*{Proof of Theorem \ref{thm7}}

\begin{proof}
First, by Theorem 2.21 of \cite{ref5} and $\Xi_\epsilon\subseteq \mathrm{ri}(\mathrm{dom}(\widehat{\varphi}_1))$, $\zeta_{\alpha,h}(s_2)$ is locally Lipschitz for any $s_2\in \cap \Xi_\epsilon$. Moreover, the epigraph of $\varphi_1$  (denoted as $\mathbf{epi}(\varphi_1)$) admits the following structure:
\begin{equation*}
  \mathbf{epi}(\varphi_1)= \left\{ (s,t) \left|
  \begin{aligned}
  &\exists \widetilde{W}:\widetilde{W}\in\Gamma_+^p,\widetilde{\Psi}_i\in\Gamma_+^n,~i\in[M],\\
  &\mathcal{A}\widetilde{W}=s, \langle \mathrm{vec}(R),\widetilde{W}\rangle \leq t
  \end{aligned}
  \right.
  \right\},
\end{equation*}
which is semi-algebraic by Tarski–Seidenberg Theorem. In addition, by Example 2 of \cite{bolte2014proximal}, $\widehat{\varphi}_1$ is semi-algebraic and hence definable in the o-minimal structure (see Definition 5.10 of \cite{davis2020stochastic}); while, by the same token, $\Xi_\epsilon$ is definable in the o-minimal structure for any choice of $\epsilon$.

Moreover, it follows that
\begin{align*}
  &\partial {\zeta}_{\alpha,h}(s_{2})\\
  =&\partial \left(\widehat{\varphi}_1(s_2)+\frac{\gamma}{c_{h,\alpha}}(h(\widetilde{\pi}(s_2))-\mathbf{env}_\alpha(h)(\widetilde{\pi}(s_2)))\right)\\
  \overset{\rm (i)}{=}&\partial \left(\widehat{\varphi}_1(s_2)+\frac{\gamma}{c_{h,\alpha}}h(\widetilde{\pi}(s_2))\right)-\frac{\gamma}{c_{h,\alpha}} \nabla \mathbf{env}_\alpha(h)(\widetilde{\pi}(s_2))\\
  \overset{\rm (ii)}{=}&\partial \widehat{\varphi}_1(s_2)+\frac{\gamma}{c_{h,\alpha}}\partial h(\widetilde{\pi}(s_2))-\frac{\gamma}{c_{h,\alpha}} \nabla \mathbf{env}_\alpha(h)(\widetilde{\pi}(s_2))\\
  \overset{\rm (iii)}{=}&\underbrace{U^\top\partial {\varphi}_1(Us_2)+\frac{\gamma}{c_{h,\alpha}}\partial h(\widetilde{\pi}(s_2))}_{\text{Minkowski sum of convex sets}}-\underbrace{\frac{\gamma}{c_{h,\alpha}} \nabla \mathbf{env}_\alpha(h)(\widetilde{\pi}(s_2))}_{\text{offset term}}.
\end{align*}
Here, by Theorem 6.60 of \cite{ref5}, $\mathbf{env}_\alpha(h)\circ \widetilde{\pi}$ is strictly differentiable at any $s_2\in \mathbb{R}^{mn}$; thus, by Theorem 9.18 of \cite{rockafellar-book}, $\partial^\infty\mathbf{env}_\alpha(h)\circ \widetilde{\pi}(s_2)=\{0\}$. Henceforth, Theorem 10.6 of \cite{rockafellar-book} implies equality (i). By Lemma \ref{subgrad_varphi1}, $\varphi_1$ is closed and convex, while equalities (ii) and (iii) hold due to Corollary 3.38 and  Theorem 3.43 of \cite{ref5}, respectively. Recalling that we have shown that $\partial \zeta_{\alpha,h}(s_2)$ can be rewritten as the Minkowski sum of convex sets with an offset term $\frac{\gamma}{c_{h,\alpha}} \nabla \mathbf{env}_\alpha(h)(\widetilde{\pi}(s_2))$, $\partial \zeta_{\alpha,h}(s_2)$ is a convex set and is equal to $\partial^C \zeta_{\alpha,h}(s_2)$

To finish the proof, by Theorem 4.2 of \cite{davis2020stochastic}, it suffices to show the descent property and weak Sard property (see Assumption F of \cite{davis2020stochastic} for the definitions). By Theorem 5.8 and Lemma 6.3 of \cite{davis2020stochastic} the descent property holds. To argue the weak Sard property, it suffices to follow the same approach as Corollary 6.4 of \cite{davis2020stochastic}.
\end{proof}

\section*{References}

\bibliographystyle{IEEEtran}
\bibliography{tac_7.9}

\begin{thebibliography}{10}
\providecommand{\url}[1]{#1}
\csname url@samestyle\endcsname
\providecommand{\newblock}{\relax}
\providecommand{\bibinfo}[2]{#2}
\providecommand{\BIBentrySTDinterwordspacing}{\spaceskip=0pt\relax}
\providecommand{\BIBentryALTinterwordstretchfactor}{4}
\providecommand{\BIBentryALTinterwordspacing}{\spaceskip=\fontdimen2\font plus
\BIBentryALTinterwordstretchfactor\fontdimen3\font minus
  \fontdimen4\font\relax}
\providecommand{\BIBforeignlanguage}[2]{{%
\expandafter\ifx\csname l@#1\endcsname\relax
\typeout{** WARNING: IEEEtran.bst: No hyphenation pattern has been}%
\typeout{** loaded for the language `#1'. Using the pattern for}%
\typeout{** the default language instead.}%
\else
\language=\csname l@#1\endcsname
\fi
#2}}
\providecommand{\BIBdecl}{\relax}
\BIBdecl

\bibitem{Tsitsiklis-1985}
J.~Tsitsiklis and M.~Athans, ``On the complexity of decentralized decision
  making and detection problems,'' \emph{IEEE Transactions on Automatic
  Control}, vol.~30, no.~5, pp. 440--446, 1985.

\bibitem{rotkowitz2005characterization}
M.~Rotkowitz and S.~Lall, ``A characterization of convex problems in
  decentralized control,'' \emph{IEEE transactions on Automatic Control},
  vol.~50, no.~12, pp. 1984--1996, 2005.

\bibitem{furieri2020sparsity}
L.~Furieri, Y.~Zheng, A.~Papachristodoulou, and M.~Kamgarpour, ``Sparsity
  invariance for convex design of distributed controllers,'' \emph{IEEE
  Transactions on Control of Network Systems}, vol.~7, no.~4, pp. 1836--1847,
  2020.

\bibitem{i14}
J.~C. Geromel, J.~Bernussou, and P.~L.~D. Peres, ``Decentralized control
  through parameter space optimization,'' \emph{Automatica}, vol.~30, no.~10,
  pp. 1565--1578, 1994.

\bibitem{ref1}
J.~Ma, Z.~Cheng, X.~Zhang, M.~Tomizuka, and T.~H. Lee, ``Optimal decentralized
  control for uncertain systems by symmetric gauss--seidel semi-proximal alm,''
  \emph{IEEE Transactions on Automatic Control}, vol.~66, no.~11, pp.
  5554--5560, 2021.

\bibitem{i9}
Y.~Zheng, M.~Kamgarpour, A.~Sootla, and A.~Papachristodoulou, ``Distributed
  design for decentralized control using chordal decomposition and admm,''
  \emph{IEEE Transactions on Control of Network Systems}, vol.~7, no.~2, pp.
  614--626, 2019.

\bibitem{i12}
B.~Yang, X.~Zhao, X.~Li, and D.~Sun, ``An accelerated proximal alternating
  direction method of multipliers for optimal decentralized control of
  uncertain systems,'' \emph{arXiv preprint arXiv:2304.11037}, 2023.

\bibitem{i10}
G.~Fazelnia, R.~Madani, A.~Kalbat, and J.~Lavaei, ``Convex relaxation for
  optimal distributed control problems,'' \emph{IEEE Transactions on Automatic
  Control}, vol.~62, no.~1, pp. 206--221, 2016.

\bibitem{i11}
Y.~Li, Y.~Tang, R.~Zhang, and N.~Li, ``Distributed reinforcement learning for
  decentralized linear quadratic control: A derivative-free policy optimization
  approach,'' \emph{IEEE Transactions on Automatic Control}, vol.~67, no.~12,
  pp. 6429--6444, 2021.

\bibitem{talebi2022policy}
S.~Talebi and M.~Mesbahi, ``Policy optimization over submanifolds for linearly
  constrained feedback synthesis,'' \emph{arXiv preprint arXiv:2201.11157},
  2022.

\bibitem{kk4}
F.~Lin, M.~Fardad, and M.~R. Jovanovi{\'c}, ``Design of optimal sparse feedback
  gains via the alternating direction method of multipliers,'' \emph{IEEE
  Transactions on Automatic Control}, vol.~58, no.~9, pp. 2426--2431, 2013.

\bibitem{bahavarnia2015sparse}
M.~Bahavarnia, ``Sparse linear-quadratic feedback design using affine
  approximation,'' \emph{arXiv preprint arXiv:1507.08592}, 2015.

\bibitem{park2020structured}
Y.~Park, R.~Rossi, Z.~Wen, G.~Wu, and H.~Zhao, ``Structured policy iteration
  for linear quadratic regulator,'' in \emph{International Conference on
  Machine Learning}.\hskip 1em plus 0.5em minus 0.4em\relax PMLR, 2020, pp.
  7521--7531.

\bibitem{takakura2023structured}
S.~Takakura and K.~Sato, ``Structured output feedback control for linear
  quadratic regulator using policy gradient method,'' \emph{IEEE Transactions
  on Automatic Control}, vol.~69, no.~1, pp. 363--370, 2023.

\bibitem{negi2020sparsity}
N.~Negi and A.~Chakrabortty, ``Sparsity-promoting optimal control of
  cyber--physical systems over shared communication networks,''
  \emph{Automatica}, vol. 122, p. 109217, 2020.

\bibitem{hong2016convergence}
M.~Hong, Z.-Q. Luo, and M.~Razaviyayn, ``Convergence analysis of alternating
  direction method of multipliers for a family of nonconvex problems,''
  \emph{SIAM Journal on Optimization}, vol.~26, no.~1, pp. 337--364, 2016.

\bibitem{themelis2020douglas}
A.~Themelis and P.~Patrinos, ``Douglas--rachford splitting and admm for
  nonconvex optimization: Tight convergence results,'' \emph{SIAM Journal on
  Optimization}, vol.~30, no.~1, pp. 149--181, 2020.

\bibitem{li2015global}
G.~Li and T.~K. Pong, ``Global convergence of splitting methods for nonconvex
  composite optimization,'' \emph{SIAM Journal on Optimization}, vol.~25,
  no.~4, pp. 2434--2460, 2015.

\bibitem{barber2024convergence}
R.~F. Barber and E.~Y. Sidky, ``Convergence for nonconvex admm, with
  applications to ct imaging,'' \emph{Journal of Machine Learning Research},
  vol.~25, no.~38, pp. 1--46, 2024.

\bibitem{davis2020stochastic}
D.~Davis, D.~Drusvyatskiy, S.~Kakade, and J.~D. Lee, ``Stochastic subgradient
  method converges on tame functions,'' \emph{Foundations of computational
  mathematics}, vol.~20, no.~1, pp. 119--154, 2020.

\bibitem{rockafellar-book}
R.~T. Rockafellar and R.~J.-B. Wets, \emph{Variational analysis}.\hskip 1em
  plus 0.5em minus 0.4em\relax Springer Science \& Business Media, 2009, vol.
  317.

\bibitem{feng-part1}
L.~Feng, X.~Li, and Y.-H. Ni, ``Nonconvex optimization framework for
  group-sparse feedback linear-quadratic optimal control part: Penalty
  approach,'' \emph{arXiv preprint arXiv:2507.18114}, 2025.

\bibitem{eaves1971basic}
B.~C. Eaves, ``On the basic theorem of complementarity,'' \emph{Mathematical
  Programming: Series A and B}, vol.~1, no.~1, pp. 68--75, 1971.

\bibitem{ref5}
A.~Beck, ``First-order methods in optimization. society for industrial and
  applied mathematics,'' \emph{Google Scholar Digital Library}, 2017.

\bibitem{chan2008constraint}
Z.~X. Chan and D.~Sun, ``Constraint nondegeneracy, strong regularity, and
  nonsingularity in semidefinite programming,'' \emph{SIAM Journal on
  optimization}, vol.~19, no.~1, pp. 370--396, 2008.

\bibitem{bai2026avoiding}
L.~Bai, Y.~Hu, H.~Wang, and X.~Yang, ``Avoiding strict saddle points of
  nonconvex regularized problems,'' \emph{SIAM Journal on Optimization},
  vol.~36, no.~2, pp. 938--967, 2026.

\bibitem{shen2019structured}
L.~Shen, B.~W. Suter, and E.~E. Tripp, ``Structured sparsity promoting
  functions,'' \emph{Journal of Optimization Theory and Applications}, vol.
  183, pp. 386--421, 2019.

\bibitem{feng2024optimization}
L.~Feng and Y.-H. Ni, ``Optimization-based design of communication topology for
  decentralized linear-quadratic problem,'' in \emph{2024 43rd Chinese Control
  Conference (CCC)}.\hskip 1em plus 0.5em minus 0.4em\relax IEEE, 2024, pp.
  5865--5870.

\bibitem{hiriart2004fundamentals}
J.-B. Hiriart-Urruty and C.~Lemar{\'e}chal, \emph{Fundamentals of convex
  analysis}.\hskip 1em plus 0.5em minus 0.4em\relax Springer Science \&
  Business Media, 2004.

\bibitem{rockafellar1997convex}
R.~T. Rockafellar, \emph{Convex analysis[M]}.\hskip 1em plus 0.5em minus
  0.4em\relax Princeton university press, 1997, vol.~28.

\bibitem{beck2019optimization}
A.~Beck and N.~Hallak, ``Optimization problems involving group sparsity
  terms,'' \emph{Mathematical Programming}, vol. 178, pp. 39--67, 2019.

\bibitem{bolte2014proximal}
J.~Bolte, S.~Sabach, and M.~Teboulle, ``Proximal alternating linearized
  minimization for nonconvex and nonsmooth problems,'' \emph{Mathematical
  Programming}, vol. 146, no.~1, pp. 459--494, 2014.

\end{thebibliography}

\end{document}